\def\bbone{{\mathbbm 1}}
\theoremstyle{plain}
\newtheorem{theorem}{Theorem}[section]
\newtheorem*{theorema}{Theorem A}
\newtheorem{lemma}[theorem]{Lemma}
\newtheorem{corollary}[theorem]{Corollary}
\newtheorem*{key estimate}{Key estimate}
\theoremstyle{definition}
\newtheorem*{key example}{Key example}
\theoremstyle{remark}
\newtheorem{remark}{Remark}
\DeclareMathOperator{\supp}{supp}
\newcommand{\R}{\mathbb{R}}
\numberwithin{equation}{section} 
\numberwithin{figure}{section}
\numberwithin{table}{section}
\begin{document}

\title{On sharp isoperimetric inequalities on the hypercube}

\author{David Beltran}
\address{(D.B.) Departament d’An\`alisi Matem\`atica, Universitat de Val\`encia, Dr. Moliner 50, 46100 Burjassot, Spain}
\email{david.beltran@uv.es}

\author{Paata Ivanisvili}
\address{(P.I.) Department of Mathematics, University of California, 
Irvine, CA 92617, USA}
\email{pivanisv@uci.edu}

\author{Jos\'e Madrid}
\address{(J.M.) Universite de Geneve, Geneva 4, Switzerland}
\email{Jose.Madrid@unige.ch}

\begin{abstract}
We prove the sharp isoperimetric inequality
$$
\mathbb{E} \,h_{A}^{\log_{2}(3/2)} \geq \mu(A)^{*} (\log_{2}(1/\mu(A)^{*}))^{\log_{2}(3/2)}
$$
for all sets $A \subseteq \{0,1\}^n$, where $\mu$ denotes the uniform probability measure, $\mu(A)^{*}=\min\{\mu(A), 1-\mu(A)\}$,  $h_A$ is supported on $A$ and to each vertex $x$ assigns  the number of neighbour vertices  in the complement of $A$. The inequality becomes equality for any subcube. 
Moreover, we provide lower bounds on $\mathbb{E} h_{A}^{\beta}$ in terms of $\mu(A)$ for all $\beta \in [1/2,1]$, improving, and in some cases  tightening,  previously known results. In particular, we obtain the sharp inequality   $\mathbb{E}h_{A}^{0.53}\geq 2 \mu(A)(1-\mu(A))$ for all sets with $\mu(A)\geq 1/2$, which allows us to refine a recent result of Kahn and Park on isoperimetric inequalities about partitioning the hypercube. 
Furthermore, we derive
Talagrand's isoperimetric inequalities for functions with values in a Banach space having finite cotype: for all $f :\{-1,1\}^{n} \to X$, $\|f\|_{\infty}\leq 1$, and any $p \in [1,2]$ we have 
 $$
 \|Df\|_{p} \gtrsim \frac{1}{q^{3/2}C_{q}(X)} \|f\|_{2}^{2/p}\left(\log \frac{e\|f\|_{2}}{\|f\|_{1}}\right)^{1/q},
 $$
 where $\| Df\|_{p}^{p} = \mathbb{E} \| \sum_{1\leq j \leq n} x'_{j} D_{j} f(x)\|^{p}$,  $x'$ is independent copy of $x$, and $C_{q}(X)$ is the cotype $q$ constant of $X$. Different proofs of the recently resolved Talagrand's conjecture will be presented. 

\end{abstract}

\subjclass[2010]{46B09; 46B07; 60E15}

\keywords{Isoperimetry, hypercube}

\maketitle

\thispagestyle{empty}

\section{Introduction}

Let $n \geq 1$ be an integer and $\{0,1\}^{n}$ be the hypercube of dimension $n$. One can regard $\{0,1\}^{n}$ as a graph where two vertices $x,y \in \{0,1\}^{n}$ are joined by an edge if the vectors $x=(x_{1}, \ldots, x_{n})$ and $y=(y_{1}, \ldots, y_{n})$ differ in exactly one coordinate. We denote such an edge by $(x,y)$. The goal of this article is to prove several sharp isoperimetric inequalities for subsets of $\{0,1\}^n$.

\subsection{One sided boundary}\label{sec:one-sided}
Let $A \subset \{0,1\}^{n}$ and $A^c:=\{0,1\}^n \backslash A$. Define the edge boundary of $A$ by $\nabla A := \{(x,y) : x \in A , y \in A^c\}$. Associated to $\nabla A$, we define the function $h_A: \{0,1\}^n \to \R$ by $h_{A}(x)=0$ if $x \in A^{c}$, and if $x \in A$ we let $h_{A}(x)$ be the number of edges joining $x$ with a vertex in $A^{c}$. Let  $\mu$ be the uniform probability measure on $\{0,1\}^n$ and let $\mathbb{E}$ denote the expectation operator. 
One of the goals of this paper is to investigate the quantity 
\begin{align}\label{conj1}
B(t,\beta,n) = \min_{A \subset \{0,1\}^{n}, \mu(A)=t}\mathbb{E} \, h_{A}^{\beta},
\end{align}
for any $\beta >0$, $t \in [0,1]$, $n \geq 1$; note that for fixed $n \geq 1$, the admissible values for $t \in [0,1]$ are of the form $t=m/2^n$ for all integers $0 \leq m \leq 2^n$.   

When $\beta=1$, the quantity $\mathbb{E} h_{A}$ has an additional structure, i.e., $\mathbb{E} h_{A} = \mathbb{E} h_{A^{c}}$, and we have $\mathbb{E}h_A=\frac{|\nabla A|}{2^n}$, where $|\cdot|$ denotes the cardinality of a set. In this case, it is known \cite{Hart} that
\begin{align}\label{harp1}
B(t,1,n) = n t-\frac{1}{2^{n-1}}\sum_{j=1}^{2^{n}t-1}s(j),
\end{align}
where $s(j)$ denotes the sum of ones in the binary representation of the integer $j$. The minimum in (\ref{conj1}) is achieved on the set $A$, with $\mu(A)=t$, consisting of those vertices $x=(x_{1},\ldots, x_{n}) \in \{0,1\}^{n}$ such that $\sum_{j=1}^{n} 2^{j-1} x_{j}=\ell$, for each $\ell=1,2, \ldots, t2^{n}$. The corresponding edge isoperimetric inequality derived from (\ref{harp1}) has applications in game theory \cite{Hart}, distributed algorithms, communication complexity and network science (see \cite{Cyrus} and references therein).

Despite the optimality of the function $B(t,1,n)$, it is often more convenient to use the lower bound  

\begin{equation}\label{eq:classical iso}
\mathbb{E} h_A \geq \mu(A)^{*} \log_2\Big(\frac{1}{\mu(A)^{*}}\Big),
\end{equation}
where $t^{*} = \min\{t,1-t\}$ for all $t \in [0,1]$. This is commonly known as the classical isoperimetric inequality on the hypercube. 
An advantage of the inequality $B(t,1,n) \geq t^{*} \log_{2}(1/t^{*})$ is that a) the right-hand side is independent of $n$; b) it becomes an equality\footnote{The inequality \eqref{eq:classical iso} also becomes an equality when  $\mu(A)=1-2^{-k}$ due to the identity $\mathbb{E}h_{A} = \mathbb{E}h_{A^{c}}$} at the points $t=2^{-k}$ for $0\leq k \leq n$. In this case ($\beta=1$, $t=2^{-k}$),  the minimum in (\ref{conj1}) is achieved on subcubes of co-dimension $k$, for any $0\leq k \leq n$.  In what follows we will be interested in  $\inf_{n\geq 1} B(t,\beta,n)$. 

By considering hamming balls  $\{x_{1}+\ldots+x_{n} \leq [n/2]\}$ in (\ref{conj1}) one can see that 
\begin{equation}\label{eq:beta>=1/2}
\text{if} \quad \beta<1/2 \quad \text{then} \quad \inf_{n \geq 1} B(t,\beta,n)=0;
\end{equation}
see Section \ref{sec:nec}. On the other hand, by considering subcubes  in (\ref{conj1}) it follows that  
\begin{align}\label{sharp1}
t (\log_{2}(1/t))^{\beta} \geq \inf_{n \geq 1} B(t,\beta,n) \quad \text{at the points} \quad  t=2^{-k},  \quad k=0,1,2, \ldots.
\end{align}
A straightforward application of the H\"older inequality in \eqref{eq:classical iso} implies
\begin{equation*}
\mathbb{E} h_{A}^\beta  \geq \mu(A) \Big( \log_2\Big(\frac{1}{\mu(A)}\Big)\Big)^\beta \qquad \text{for all $\beta \geq 1$},
\end{equation*}
and combining this with \eqref{sharp1} establishes $\inf_{n \geq 1} B(t,\beta, n)=t (\log_{2}(1/t))^{\beta}$ at   $t=2^{-k}$ for all $k \geq 0$ and all $\beta \geq 1$. From now on, we will be concerned with $1/2 \leq \beta < 1$.

The case $\beta=1/2$ was first considered by Talagrand in  \cite{Tal93}. By an inductive argument, he proved the reverse inequality to (\ref{sharp1}) up to a universal constant $K>1$, namely
\begin{align}\label{tt1}
\mathbb{E} \sqrt{h_{A}} \geq  K \mu(A)^{*} \sqrt{\log_{2}(1/\mu(A)^{*})}
\end{align}
for all $A \subseteq \{0,1\}^n$. 
Furthermore, he also proved that
\begin{align}\label{tt2}
\mathbb{E} \sqrt{h_{A}} \geq \sqrt{2}\mu(A)(1-\mu(A)) 
\end{align}
holds for all $A \subset \{0,1\}^{n}$. Bobkov and G\"otze \cite{bob1} came up with an improved and elegant inductive argument and, in particular, one of their results implies that $\sqrt{2}$ can be replaced by $\sqrt{3}$ in (\ref{tt2}). Recently, Kahn and Park \cite{KP} showed that 
\begin{align}\label{kpp1}
\mathbb{E} h^{\log_{2}(3/2)}_{A} \geq 2 \mu(A)(1-\mu(A)), 
\end{align}
and the constant $2$ in the right-hand side of (\ref{kpp1}) is sharp: the inequality becomes an equality for subcubes of co-dimension 1 and 2. In fact, for any given $\beta \geq 1/2$, the largest possible constant $C_\beta$ in an inequality of the form $\mathbb{E} h_A^{\beta} \geq C_\beta \mu(A) (1-\mu(A))$ for all $A \subseteq \{0,1\}^n$ is $C_\beta \leq 2$; this follows from testing the inequality on a half cube. Furthermore, by considering subcubes of co-dimension 2, the constant $C_\beta=2$ is only possible if $\beta \geq \log_2(3/2)$.

In this paper we prove a theorem which, in particular, reverses the inequality (\ref{sharp1}) for  all $\beta \geq  \log_{2}(3/2)=0.5849...$ and therefore gives the value for $\inf\limits_{n\geq 1} B(t, \beta, n)$  for that range of $\beta$ whenever $t=2^{-k}$, $k\geq 0$.  

\begin{theorem}\label{mth11}
Let $\beta_0:=\log_2(3/2)$. Then the inequality 
\begin{align}\label{iso2}
\mathbb{E} h^{\beta_{0}}_{A} \geq \mu(A)^{*} (\log_{2}(1/\mu(A)^{*}))^{\beta_{0}} 
\end{align}
holds for all $A \subset \{0,1\}^{n}$. In particular, if $\mu(A)\leq 1/2$ we have 
\begin{align}\label{iso1}
\mathbb{E} h^{\beta}_{A} \geq \mu(A) (\log_{2}(1/\mu(A)))^{\beta} \qquad \text{for all} \quad \beta \geq \log_{2}(3/2).
\end{align}
 The equality holds in both  (\ref{iso2}) and (\ref{iso1}) for any subcube $A \subset \{0,1\}^{n}$. 
\end{theorem}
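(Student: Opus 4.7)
The plan is to prove the inequality by induction on the dimension $n$, with the inequality (\ref{iso2}) serving as the induction hypothesis. The base case $n=1$ is easy: the only nontrivial sets are $A=\{0\}$ and $A=\{1\}$, for which $\mu(A)^*=1/2$ and $\mathbb{E}h_A^{\beta_0}=1/2$, matching the right-hand side $\tfrac12(\log_22)^{\beta_0}=1/2$ exactly. Note that subcubes of codimension $k$ give equality whenever $t=2^{-k}$, so the induction must be tight along this family.

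For the inductive step, I would slice $A\subset\{0,1\}^n$ by the last coordinate, writing $A = (A_0\times\{0\})\cup(A_1\times\{1\})$ with $A_0,A_1\subset\{0,1\}^{n-1}$ and $p=\mu_{n-1}(A_0)$, $q=\mu_{n-1}(A_1)$, so $\mu(A)=(p+q)/2$. The neighbors of $(x,i)$ split into neighbors coming from the $(n-1)$-dimensional face and one across the last coordinate, yielding the identity
\begin{align*}
\mathbb{E}_{n}h_A^{\beta_0}
&=\tfrac{1}{2}\bigl(\mathbb{E}_{n-1}h_{A_0}^{\beta_0}+\mathbb{E}_{n-1}h_{A_1}^{\beta_0}\bigr)\\
&\quad+\tfrac{1}{2}\mathbb{E}_{n-1}\bigl[\mathbf{1}_{A_0\setminus A_1}\bigl((h_{A_0}+1)^{\beta_0}-h_{A_0}^{\beta_0}\bigr)+\mathbf{1}_{A_1\setminus A_0}\bigl((h_{A_1}+1)^{\beta_0}-h_{A_1}^{\beta_0}\bigr)\bigr].
\end{align*}
The first line, by the induction hypothesis, is at least $\tfrac12(\varphi(p)+\varphi(q))$, where $\varphi(t)=t^*(\log_2(1/t^*))^{\beta_0}$. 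What must be shown is that when this is combined with the boundary correction on the second line, the total is at least $\varphi((p+q)/2)$, with the identity $2^{\beta_0}=3/2$ driving the constants.

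To obtain the sharp constant I expect one has to prove, and propagate through the induction, a stronger two-parameter inequality rather than just (\ref{iso2}); a natural candidate is to keep track simultaneously of $\mathbb{E}h_A^{\beta_0}$ and a secondary quantity such as the "inner boundary mass" $\mu(\{x\in A:h_A(x)\geq 1\})$, so that the difference $(h+1)^{\beta_0}-h^{\beta_0}$ (which is largest, equal to $1$, precisely where $h=0$) can be lower bounded in inductive terms. The trick behind Kahn--Park's proof of (\ref{kpp1}) rewrites the cross term as $3/2\cdot(\,\cdot\,)-1\cdot(\,\cdot\,)$ using $2^{\beta_0}=3/2$, and the same identity should let us match the inductive surplus to the gap between $\tfrac12(\varphi(p)+\varphi(q))$ and $\varphi((p+q)/2)$.

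The main obstacle will be the analytic inequality. The function $\varphi$ is concave on $[0,1/2]$, so $\tfrac12(\varphi(p)+\varphi(q))\leq\varphi((p+q)/2)$ and the deficit must be absorbed by the boundary term, whose size is controlled only by $|A_0\triangle A_1|$ and by a pointwise quantity depending on $h_{A_0},h_{A_1}$. Two extremal configurations must both be treated with equality: (i) $p=q$ with $A_0=A_1$ (subcubes with the $n$th coordinate free), and (ii) $\{p,q\}=\{0,1\}$ (subcubes where $x_n$ is fixed), since both produce equality in (\ref{iso2}). Ensuring a single functional form handles both equality cases, and that the resulting pointwise inequality on $[0,1]^2$ can be verified (likely by reducing to a one-variable convexity/concavity check at $p=q$ after symmetrizing), is where the calibration $\beta_0=\log_2(3/2)$ is dictated. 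Finally, the $\mu(A)>1/2$ range of (\ref{iso2}) needs a separate argument since $\mathbb{E}h_A^{\beta_0}\neq\mathbb{E}h_{A^c}^{\beta_0}$ when $\beta_0<1$; one plausible route is to show that $\mathbb{E}h_A^{\beta_0}$ is bounded below by a suitable function of the "inner vertex boundary" of $A^c$ via $h_A\geq\mathbf{1}_{\{h_A\geq 1\}}$, and apply the classical isoperimetric inequality (\ref{eq:classical iso}) to $A^c$.
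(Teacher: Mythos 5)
Your plan correctly identifies the inductive skeleton (slicing by a coordinate is exactly what underlies the Kahn--Park lemma, restated here as Lemma~\ref{th44}), and you are right that the naive induction on \eqref{iso2} alone does not close because $\varphi(t)=t^*(\log_2(1/t^*))^{\beta_0}$ is concave and the boundary term must absorb the Jensen deficit. But the proposal has a genuine gap at precisely the point where you defer: the strengthening you suggest (a two-parameter induction tracking $\mathbb{E}h_A^{\beta_0}$ together with the inner boundary mass $\mu(\{x\in A: h_A(x)\geq 1\})$) is never carried out, and it is not the mechanism that works in the paper. The actual fix is to strengthen the \emph{target function} rather than add a second tracked quantity: one runs Lemma~\ref{th44} with $B(t)=\max\{t^*(\log_2(1/t^*))^{\beta_0},\,2t(1-t)\}$ as in \eqref{eq:B}, so that Kahn--Park's quadratic lemma (Lemma~\ref{help3}) handles the middle range $[1/4,3/4]$ and new two-point inequalities for the logarithmic piece (Lemmas~\ref{help1} and~\ref{help2}) handle the ranges near $0$ and $1$; the six-case check of \eqref{sami} uses monotonicity of the functional $\Phi$ in its first two arguments to switch between the two branches of the max. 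Without some such strengthening, the single inequality you propose to prove at the inductive step is false in general, and your sketch gives no candidate inequality that is actually verified.

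Moreover, the analytic heart of the theorem is entirely absent from your proposal: the two-point inequality $((y-x)^{1/\beta}+B(y)^{1/\beta})^{\beta}+B(x)\geq 2B(\tfrac{x+y}{2})$ for $B(x)=x(\log_2(1/x))^{\beta}$ on $0\leq x\leq y\leq 1/2$ requires a nontrivial chain of reductions (monotonicity of $\beta\mapsto(\mathbb{E}|X|^{\beta})^{1/\beta}$ to reduce to $\beta=1/2$, a monotonicity-in-$a$ argument to reduce to $y=1/2$, then Taylor-type bounds on the logarithms and a Descartes' rule of signs analysis of a quartic). Saying ``the main obstacle will be the analytic inequality'' does not discharge it. Finally, your proposed treatment of $\mu(A)>1/2$ via $h_A\geq\mathbbm{1}_{\{h_A\geq 1\}}$ and the classical isoperimetric inequality applied to $A^c$ cannot yield the sharp constant: it compares the vertex boundary of $A$ with the edge boundary of $A^c$ and loses exactly the factor the theorem is designed to capture. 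In the paper this range is handled inside the same two-point framework, via Lemma~\ref{help2}, which follows from Lemma~\ref{help1} by observing that $t\mapsto((y-x)^{1/\beta}+t^{1/\beta})^{\beta}-t$ is non-increasing and that $B$ is monotone on $[0,1/2]$.
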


Note that the above theorem is a tight version of \eqref{kpp1} on all subcubes. The comparison between the inequalities (\ref{kpp1}) and (\ref{iso2}) is that the function $2t(1-t)$ is larger or equal than $t^{*}(\log_{2} (1/t^{*}))^{\beta_{0}}$ on $[1/4,3/4]$, whereas the second function is larger or equal than the first one in the complementary range. In general, a similar phenomenon happens when comparing $2t(1-t)$ and $t^{*}(\log_{2} (1/t^{*}))^{\beta}$ for other values of $1/2 \leq \beta \leq 1$: the second one is significantly larger in neighborhoods of $t=0$ and $t=1$ and, moreover, it satisfies (\ref{sharp1}). It is an interesting question to understand whether the inequality (\ref{iso2}) holds for all $\beta_{0}  \geq 1/2$. 

Let $\partial A = \mathrm{supp}(h_{A})\subset A$ be the vertex boundary of $A$. An application of the H\"older inequality $\mathbb{E} h^{\beta}_{A} \leq (\mathbb{E}h_{A}^{\beta/\alpha})^{\alpha} \mu(\partial A)^{1-\alpha}$ for all $\alpha \in [0,1]$ implies the following corollary, which sharpens the classical isoperimetric inequality \eqref{eq:classical iso}.
\begin{corollary}
For $\gamma= \frac{1}{\log_{2}(3/2)}-1=0.709...$, and any $A \subset \{0,1\}^{n}$ with $\mu(A)\leq 1/2$ we have 
\begin{align}\label{uncertain}
\mathbb{E} h_{A} \geq  \left(\frac{\mu(A)}{\mu(\partial A)}\right)^{\gamma} \mu(A) \log_{2}\Big(\frac{1}{\mu(A)}\Big).
\end{align}
Moreover, the equality holds for any subcube. 
\end{corollary}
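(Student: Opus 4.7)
The plan is to combine the sharp inequality \eqref{iso1} from Theorem \ref{mth11} with the H\"older inequality already signaled in the statement, specialized to make both factors on the right reduce to quantities appearing in the target bound. Concretely, I would take $\beta_{0}=\log_{2}(3/2)$ and apply H\"older with exponents $1/\beta_{0}$ and $1/(1-\beta_{0})$ to $\mathbb{E} h_{A}^{\beta_{0}} = \mathbb{E}[h_{A}^{\beta_{0}} \mathbf{1}_{\partial A}]$ (the insertion of $\mathbf{1}_{\partial A}$ is free since $h_{A}$ vanishes off $\partial A$). This is the $\alpha=\beta_{0}$, $\beta=\beta_{0}$ instance of the quoted H\"older inequality and yields
$$
\mathbb{E} h_{A}^{\beta_{0}} \leq (\mathbb{E} h_{A})^{\beta_{0}}\, \mu(\partial A)^{1-\beta_{0}}.
$$

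Next I would lower-bound the left-hand side using \eqref{iso1}, which is applicable because $\mu(A) \leq 1/2$, obtaining
$$
\mu(A)\bigl(\log_{2}(1/\mu(A))\bigr)^{\beta_{0}} \leq (\mathbb{E} h_{A})^{\beta_{0}}\, \mu(\partial A)^{1-\beta_{0}}.
$$
Taking $(1/\beta_{0})$-th roots and isolating $\mathbb{E} h_{A}$ gives
$$
\mathbb{E} h_{A} \geq \mu(A)^{1/\beta_{0}}\, \mu(\partial A)^{-(1/\beta_{0}-1)}\, \log_{2}(1/\mu(A)),
$$
and since $\gamma = 1/\beta_{0}-1$, the right-hand side rearranges exactly to $(\mu(A)/\mu(\partial A))^{\gamma}\, \mu(A)\log_{2}(1/\mu(A))$, which is \eqref{uncertain}.

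For the equality statement I would simply observe that on a subcube $A$ of codimension $k\geq 1$ the function $h_{A}$ is identically $k$ on $A$, so $\partial A = A$ and the prefactor $(\mu(A)/\mu(\partial A))^{\gamma}$ collapses to $1$; moreover H\"older is tight because $h_{A}$ is constant on its support, and \eqref{iso1} is tight by Theorem \ref{mth11}. Both sides then equal $k/2^{k}$.

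There is no real obstacle here: all the difficulty is packed into Theorem \ref{mth11}, and the only thing to verify is the compatibility of the two tightness conditions (H\"older and \eqref{iso1}) on the same extremizers, which follows immediately from the constancy of $h_{A}$ on a subcube.
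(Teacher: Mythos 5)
Your proof is correct and follows essentially the same route as the paper: the stated H\"older inequality $\mathbb{E} h_{A}^{\beta_0} \leq (\mathbb{E} h_{A})^{\beta_0}\mu(\partial A)^{1-\beta_0}$ (the $\alpha=\beta_0$ case of the inequality quoted before the corollary) combined with the bound $\mathbb{E} h_{A}^{\beta_0} \geq \mu(A)(\log_2(1/\mu(A)))^{\beta_0}$ from Theorem \ref{mth11}, followed by rearranging with $\gamma = 1/\beta_0 - 1$. Your verification of equality on subcubes (where $h_A \equiv k$ and $\partial A = A$) is also the intended one.
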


The constant $\sqrt{3}$ in Bobkov's inequality $\mathbb{E} \sqrt{h_{A}} \geq \sqrt{3} \mu(A)(1-\mu(A))$ is not sharp. We obtain the following improvement.

\begin{theorem}\label{thm:quadratic}
For any $\beta \in [1/2, \log_{2}(3/2)]$ we have 
\begin{align}\label{iso90}
\mathbb{E} h_{A}^{\beta}\geq C_\beta \, \mu(A)(1-\mu(A))
\end{align}
for all $A \subset \{0,1\}^{n}$, where $C_\beta = 2 \sqrt{2^{\beta+1}-2}$. 
\end{theorem}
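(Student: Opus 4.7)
\textbf{Plan for the proof of Theorem~\ref{thm:quadratic}.} The plan is to prove~\eqref{iso90} by induction on the dimension $n$, adapting the scheme used by Bobkov--G\"otze and by Kahn--Park. For $n=1$ the only nontrivial subsets of $\{0,1\}$ are singletons, where $\mathbb{E} h_A^\beta=1/2$ and $\mu(A)(1-\mu(A))=1/4$; the desired inequality collapses to $C_\beta\le 2$, which is equivalent to $2^{\beta+1}\le 3$ and therefore holds precisely on $\beta\in[1/2,\log_2(3/2)]$, saturating at the upper endpoint so as to recover Kahn--Park's~\eqref{kpp1}. This base case is the only place where the range of $\beta$ is actively used.

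For the inductive step, I would slice $A\subset\{0,1\}^n$ along the last coordinate into $A_0,A_1\subset\{0,1\}^{n-1}$ with measures $\mu_i=\mu(A_i)$. A direct computation yields
\[
2\,\mathbb{E} h_A^\beta=\mathbb{E}_{n-1}(h_{A_0}+\mathbf{1}_{A_0\setminus A_1})^\beta+\mathbb{E}_{n-1}(h_{A_1}+\mathbf{1}_{A_1\setminus A_0})^\beta,
\]
together with the elementary variance decomposition
\[
2\mu(A)(1-\mu(A))=\mu_0(1-\mu_0)+\mu_1(1-\mu_1)+\tfrac{1}{2}(\mu_0-\mu_1)^2.
\]
The induction hypothesis $\mathbb{E}_{n-1}h_{A_i}^\beta\ge C_\beta\mu_i(1-\mu_i)$ absorbs the first two terms of the variance split, and the remaining task is to show that the indicator corrections produce a surplus of size at least $\tfrac{C_\beta}{2}(\mu_0-\mu_1)^2$, i.e.\
\[
\mathbb{E}_{n-1}\bigl[\mathbf{1}_{A_0\setminus A_1}\,\delta_\beta(h_{A_0})+\mathbf{1}_{A_1\setminus A_0}\,\delta_\beta(h_{A_1})\bigr]\ \ge\ \tfrac{C_\beta}{2}(\mu_0-\mu_1)^2,
\]
where $\delta_\beta(k):=(k+1)^\beta-k^\beta$.

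The analytic core is a Cauchy--Schwarz estimate on these increments. On the ``low-boundary'' part where $h_{A_i}\in\{0,1\}$, the increment $\delta_\beta$ takes the explicit values $1$ and $2^\beta-1$, and a Cauchy--Schwarz step, combined with $|\mu_0-\mu_1|\le\mu(A_0\setminus A_1)+\mu(A_1\setminus A_0)$, gives a lower bound proportional to $\sqrt{2(2^\beta-1)}\,|\mu_0-\mu_1|$; squaring this yields the factor $\tfrac{C_\beta^2}{4}=2^{\beta+1}-2$, which is exactly what pins down the formula for $C_\beta$. The main obstacle is the ``high-boundary'' portion of $A_i$ where $h_{A_i}\ge 2$ and $\delta_\beta$ is smaller: there one expects to appeal to the already-established Kahn--Park inequality~\eqref{kpp1} applied to $A_i$ at the larger exponent $\beta_0\ge\beta$ to absorb the missing contribution. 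The internal consistency $C_{\log_2(3/2)}=2$ with the sharp Kahn--Park constant, together with the fact that the upper limit $\beta=\log_2(3/2)$ is forced by the base case, provides a strong check that this extraction of surplus is possible throughout the claimed range.
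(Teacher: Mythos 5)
Your slicing identity and the variance decomposition are both correct, and the scheme you describe is indeed the family of inductions behind this result (it is essentially how the paper's Lemma~\ref{th44}, borrowed from Kahn--Park, is proved). The gap is at the central step: the sufficient condition you reduce to, namely
\[
\mathbb{E}_{n-1}\bigl[\mathbf{1}_{A_0\setminus A_1}\,\delta_\beta(h_{A_0})+\mathbf{1}_{A_1\setminus A_0}\,\delta_\beta(h_{A_1})\bigr]\ \ge\ \tfrac{C_\beta}{2}(\mu_0-\mu_1)^2,
\]
is simply false for the claimed constant $C_\beta=2\sqrt{2^{\beta+1}-2}$. Take $\beta=1/2$, $A_1=\emptyset$, and $A_0\subset\{0,1\}^{n-1}$ a subcube of codimension $1$, so $\mu_0=1/2$, $\mu_1=0$ and $h_{A_0}\equiv 1$ on $A_0$: the left-hand side equals $\tfrac12(2^{1/2}-1)\approx 0.207$, while $\tfrac{C_{1/2}}{2}(\mu_0-\mu_1)^2=\tfrac{1}{4}\sqrt{2^{3/2}-2}\approx 0.228$. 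The theorem survives this configuration only because the induction hypothesis applied to $A_0$ has slack there; your reduction throws that slack away by applying the hypothesis at full strength to both slices and demanding the entire defect $\tfrac{C_\beta}{2}(\mu_0-\mu_1)^2$ from the increments alone. Note that this failure already occurs in your ``low-boundary'' regime $h\in\{0,1\}$, where you assert a Cauchy--Schwarz bound suffices; and for the ``high-boundary'' regime the proposed appeal to \eqref{kpp1} points the wrong way: since $h\ge 1$ on its support, $\mathbb{E}h^{\beta_0}\ge\mathbb{E}h^{\beta}$ for $\beta_0\ge\beta$, so a lower bound on $\mathbb{E}h_{A_i}^{\beta_0}$ gives no control on quantities involving $h_{A_i}^{\beta}$.

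What is needed, and what the paper does, is to keep the induction hypothesis coupled to the increment instead of adding them: the paper invokes Lemma~\ref{th44}, which closes the induction provided $B(x)=C_\beta x(1-x)$ satisfies the two-point inequality \eqref{sami}, whose left-hand side $\max\{((y-x)^{1/\beta}+B(y)^{1/\beta})^{\beta},\,y-x+(2^\beta-1)B(y)\}+B(x)$ encodes exactly the stronger, coupled way of using the hypothesis on the slices. The actual content of the proof is then the verification of \eqref{sami} for this quadratic $B$, carried out in Lemma~\ref{lemma:quad case1} (concavity in $x$ plus checking the two endpoints $x=0$ and $x=y-B(y)$) and Lemma~\ref{lemma:quad case 2} (a count of critical points of $x\mapsto g(x,y)$); none of this, or a substitute for it, appears in your plan. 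A smaller inaccuracy: the restriction $\beta\le\log_2(3/2)$ is not used only in the base case --- the sign condition $3-2^{\beta+1}\ge 0$ is needed in the verification of Lemma~\ref{lemma:quad case1} as well.
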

The theorem applied to the case $\beta=1/2$ gives the improved constant $C_{1/2}=1.82...>\sqrt{3} =1.73... .$
Testing the inequality $\mathbb{E}h_A^\beta \geq C_\beta \mu(A)(1-\mu(A))$ on subcubes of co-dimension 2, we obtain the upper bound  $C_\beta \leq 2^{\beta+2}/3$ for $1/2 \leq \beta \leq \log_2(3/2)$. For $\beta = \log_{2}(3/2)$ the inequality (\ref{iso90}) coincides with (\ref{kpp1}).

\subsection{Separating the cube}\label{intro:sep}

If one considers sets $A \subset \{0,1\}^n$ with measure $\mu(A) \geq 1/2$, it is conceivable that $\mathbb{E} h_A^\beta \geq 2 \mu(A)(1-\mu(A))$ could hold for a wider range of exponents $\beta \geq 1/2$. In this direction we obtain the following.

\begin{theorem}\label{thm:cubic}
For all $A\subset\{0,1\}^n$,
\begin{align}\label{eq: cubic}
    \mathbb{E} h_A^{0.53}   \geq 8 \mu(A) (1-\mu(A)) \Big[\Big(1-\frac{2 \sqrt{2}}{3}\Big)\mu(A) + \frac{\sqrt{2}}{3} - \frac{1}{4}\Big].
\end{align}
\end{theorem}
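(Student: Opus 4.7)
Set $F(t) = 8t(1-t)\bigl[(1-\tfrac{2\sqrt 2}{3})t+\tfrac{\sqrt 2}{3}-\tfrac14\bigr]$. The plan is to establish $\mathbb{E} h_A^{0.53} \geq F(\mu(A))$ by induction on the dimension $n$, following the Bobkov--G\"otze proof of \eqref{tt2}, the Kahn--Park proof of \eqref{kpp1}, and the inductive template behind Theorems \ref{mth11} and \ref{thm:quadratic}. The base case $n=1$ is a direct check on the four subsets of $\{0,1\}$: the half-cube already yields $\mathbb{E} h_A^{0.53}=\tfrac12=F(\tfrac12)$, which is tight.

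For the inductive step, slice $A\subset\{0,1\}^n$ along its last coordinate into $A_0,A_1\subset\{0,1\}^{n-1}$ of measures $t_0,t_1$, write $t=(t_0+t_1)/2=\mu(A)$, and use the standard splitting
\[
2\,\mathbb{E}_n h_A^{0.53} \;=\; \mathbb{E}_{n-1}\bigl(h_{A_0}+\mathbf{1}_{A_0\setminus A_1}\bigr)^{0.53} + \mathbb{E}_{n-1}\bigl(h_{A_1}+\mathbf{1}_{A_1\setminus A_0}\bigr)^{0.53}.
\]
The target is to bound this quantity below by $2F(t)$. On $A_i\cap A_{1-i}$ the contribution matches $h_{A_i}^{0.53}$ and is handled by the inductive hypothesis $\mathbb{E}_{n-1}h_{A_i}^{0.53}\geq F(t_i)$, while on $A_i\setminus A_{1-i}$ one picks up the pointwise ``$+1$'' gain $\Delta(y):=(y+1)^{0.53}-y^{0.53}$. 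After bookkeeping, the whole inductive step reduces to a scalar inequality in the three parameters $t_0$, $t_1$, and $\mu(A_0\cap A_1)$ that one hopes to verify by elementary calculus.

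The principal obstacle is that $\Delta(y)$ is decreasing in $y$ and vanishes as $y\to\infty$, so the ``$+1$'' contribution cannot be bounded pointwise by a positive absolute constant; this is essentially the same technical difficulty encountered by Kahn--Park at $\beta=\log_2(3/2)$. A natural remedy is to reinforce the induction with an auxiliary parameter---for instance, proving the stronger statement $\mathbb{E}(h_A+a\mathbf{1}_A)^{0.53}\geq F_a(\mu(A))$ for a one-parameter family $\{F_a\}_{a\ge 0}$ with $F_0=F$, or, in the Bobkov--G\"otze style, replacing $y^{0.53}$ by a carefully chosen convex function of $(h_A,\mathbf{1}_A)$ that linearizes cleanly under the splitting. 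The cubic shape of $F$ (calibrated so that $F(\tfrac12)=\tfrac12$ and so that $F$ is tangent to $2t(1-t)$ at $t=\tfrac12$ from below) was tuned precisely to leave the slack needed to close this strengthened induction.

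To localize where the delicate analysis is actually required, one can observe that Theorem \ref{thm:quadratic} at $\beta=0.53$ already gives $\mathbb{E} h_A^{0.53}\geq 2\sqrt{2^{1.53}-2}\,\mu(A)(1-\mu(A))$, and a short computation shows that this dominates $F(\mu(A))$ for $\mu(A)\lesssim 1/4$. The substantive work is therefore concentrated in the window $\mu(A)\in[\tfrac14,\tfrac34]$, where $F$ strictly exceeds the best affine multiple of $t(1-t)$ provided by Theorem \ref{thm:quadratic} and where the two-variable local inequality must be examined with care (endpoints, monotonicity in $|t_0-t_1|$, and sign of the discriminant of the resulting quadratic in $\mu(A_0\cap A_1)$).
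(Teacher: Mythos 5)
Your plan correctly identifies the general framework (slice along a coordinate and run a Bobkov--G\"otze/Kahn--Park style induction), but as written it has a genuine gap: the entire analytic content of the theorem is missing. The paper does not need a ``reinforced'' induction with an auxiliary parameter at all --- the difficulty you flag (that $(y+1)^{0.53}-y^{0.53}$ decays and cannot be bounded below pointwise) is exactly what the quoted Lemma~\ref{th44} of Kahn--Park already resolves once and for all, via the maximum of the two expressions in \eqref{sami}: one uses $\bigl((y-x)^{1/\beta}+B(y)^{1/\beta}\bigr)^{\beta}$ when $B(y)\geq y-x$ and the linear surrogate $y-x+(2^{\beta}-1)B(y)$ otherwise. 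So the induction is not the issue; what remains, and what you never carry out, is the verification that the specific cubic $B(x)=8x(1-x)\bigl[(1-\tfrac{2\sqrt2}{3})x+\tfrac{\sqrt2}{3}-\tfrac14\bigr]$ satisfies the two-point inequality \eqref{sami} with $\beta=0.53$ for all $0\leq x\leq y\leq 1$. Your proposal reduces matters to ``a scalar inequality \ldots that one hopes to verify by elementary calculus'' and asserts that the cubic was ``tuned precisely to leave the slack needed,'' but no such verification is given. In the paper this is precisely Lemmas~\ref{lemma:cubic case 1} and~\ref{lemma:cubic case 2}: the regime $x\leq y-B(y)$ (which forces $y\geq 1/2$) is handled by concavity in $x$ plus positivity of $f(0,y)$ and of $f(y-B(y),y)=B(y)G(y)$, where $G$ is a degree-six polynomial controlled by a convexity-and-tangent-line argument; the regime $x\in[\max\{y-B(y),0\},y]$ requires showing $x\mapsto g(x,y)$ has at most one interior critical point together with boundary sign analysis, again via a convexity/tangent-line argument. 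None of this is routine, and nothing in your write-up substitutes for it.

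Two smaller points. First, your localization step is not accurate: comparing $F(t)=8t(1-t)\bigl[(1-\tfrac{2\sqrt2}{3})t+\tfrac{\sqrt2}{3}-\tfrac14\bigr]$ with the bound $C_{0.53}\,t(1-t)$, $C_{0.53}=2\sqrt{2^{1.53}-2}\approx 1.88$, from Theorem~\ref{thm:quadratic}, one finds $F(t)/(t(1-t))$ is increasing in $t$ and crosses $C_{0.53}$ near $t\approx 0.25$, so the quadratic bound only disposes of $\mu(A)\lesssim 1/4$; the remaining work covers all of $[1/4,1]$, not just $[1/4,3/4]$, and in any case you verify nothing in that window. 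Second, the displayed slicing identity and the $n=1$ base case are fine, but they only reconstruct (part of) the derivation of Lemma~\ref{th44}, which the paper simply cites; the theorem's proof stands or falls with the case analysis for the cubic $B$, which your proposal leaves entirely open.
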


By an immediate comparison of the right-hand side of \eqref{eq: cubic} with $2 \mu(A)(1-\mu(A))$ we obtain an extension of (\ref{kpp1}).
\begin{corollary}\label{cor:quad large measure}
If $\mu(A)\geq 1/2$ then
 \begin{align*}
 \mathbb{E} h_A^{0.53}   \geq 2 \mu(A) (1-\mu(A)).  
 \end{align*}
\end{corollary}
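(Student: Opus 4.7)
The plan is to deduce the corollary as an essentially immediate algebraic consequence of Theorem~\ref{thm:cubic}, so the only work is to verify that the cubic right-hand side in \eqref{eq: cubic} dominates $2\mu(A)(1-\mu(A))$ once $\mu(A)\geq 1/2$. Writing $t = \mu(A)$, Theorem~\ref{thm:cubic} gives
\[
\mathbb{E} h_A^{0.53} \geq 8\, t(1-t)\, L(t), \qquad L(t) := \Bigl(1-\tfrac{2\sqrt{2}}{3}\Bigr) t + \tfrac{\sqrt{2}}{3} - \tfrac{1}{4},
\]
so it suffices to prove that $L(t)\geq 1/4$ whenever $t\geq 1/2$.

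First, I would observe that the slope of $L$ is $1-\tfrac{2\sqrt 2}{3} > 0$, so $L$ is strictly increasing on $[0,1]$. Second, I would evaluate $L$ at the endpoint $t=1/2$, where the terms containing $\sqrt{2}$ cancel telescopically:
\[
L(1/2) = \tfrac{1}{2} - \tfrac{\sqrt 2}{3} + \tfrac{\sqrt 2}{3} - \tfrac{1}{4} = \tfrac{1}{4}.
\]
Combining these two facts, $L(t)\geq L(1/2)=1/4$ for all $t\in[1/2,1]$, which gives
\[
8\, t(1-t)\, L(t) \geq 2\, t(1-t)
\]
and hence, via Theorem~\ref{thm:cubic}, the desired bound $\mathbb{E} h_A^{0.53}\geq 2\mu(A)(1-\mu(A))$.

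There is essentially no obstacle beyond this numerical check; the mild point worth flagging is that the linear factor inside Theorem~\ref{thm:cubic} was clearly engineered so that $L(1/2)=1/4$ exactly, matching the extremal half-cube. Consequently the corollary is a sharp consequence of the cubic bound, and the entire argument is a one-line tangent-line comparison once Theorem~\ref{thm:cubic} is in hand.
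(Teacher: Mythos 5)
Your argument is correct and is exactly the comparison the paper has in mind (the paper simply calls it an "immediate comparison" of the right-hand side of \eqref{eq: cubic} with $2\mu(A)(1-\mu(A))$): the linear factor has positive slope $1-\tfrac{2\sqrt2}{3}$ and equals $\tfrac14$ at $t=\tfrac12$, so the cubic bound dominates $2t(1-t)$ for $t\geq\tfrac12$. Nothing further is needed.
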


The above Corollary has an immediate application in isoperimetric inequalities involving partitions of $\{0,1\}^n$. Let  $(A,B,W)$ be a partition of $\{0,1\}^n$, with $W$ typically thought to be small. With this setup, define
$\nabla(A,B)=\{(x,y):x \in A, y \in B\}.$ For $\beta \geq 1/2$, Kahn and Park~\cite{KP} considered inequalities of the form 
\begin{equation}\label{eq:conj n beta}
    |\nabla(A,B)|+K n^\beta  |W| \ge 2^{n-1}
\end{equation}
for sets $A$ with measure $\mu(A)=1/2$ and some absolute constant $K>0$. Note that if $W=\emptyset$ (that is, $B=A^c$), then \eqref{eq:conj n beta} becomes $|\nabla A| \geq |A|$, which is the classical isoperimetric inequality \eqref{eq:classical iso} for sets of measure $\mu(A)=1/2$. 
They conjectured \cite[Conjecture 1.3]{KP} that \eqref{eq:conj n beta} should hold for $\beta=1/2$; here $\beta \geq 1/2$ is again necessary from testing the inequality on Hamming balls (see Section \ref{sec:nec}). 
By an elementary argument (see the end of \S\ref{sec:cubic} for completeness), they showed that the inequality \eqref{kpp1} implies \eqref{eq:conj n beta} for $\beta=\log_2(3/2)$ and $K=1$. Our Corollary \ref{cor:quad large measure} yields a further improvement in the exponent $\beta$.

\begin{corollary}\label{cor:sep cube}
Let $(A,B,W)$ be a partition of $\{0,1\}^n$ such that $\mu(A)=1/2$. Then
\[ 
|\nabla(A,B)|+n^{0.53}|W| \ge 2^{n-1}.\]
\end{corollary}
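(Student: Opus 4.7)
The plan is to apply Corollary~\ref{cor:quad large measure} to the set $S := B \cup W$. Since $(A,B,W)$ partitions $\{0,1\}^{n}$ and $\mu(A)=1/2$, we have $\mu(S)=1/2$, so the corollary yields
$$
\sum_{x \in S} h_{S}(x)^{0.53} \;=\; 2^{n}\,\mathbb{E}\, h_{S}^{0.53} \;\geq\; 2^{n}\cdot 2\mu(S)(1-\mu(S)) \;=\; 2^{n-1}.
$$

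To conclude, I would bound $\sum_{x\in S} h_{S}(x)^{0.53}$ from above by $|\nabla(A,B)| + n^{0.53}|W|$. For $x\in S$, by definition $h_{S}(x)$ counts the neighbours of $x$ in $S^{c}=A$. Splitting the sum into contributions from $B$ and from $W$: on $W$ one uses the trivial pointwise bound $h_{S}(x)\leq n$ to get $\sum_{x\in W} h_{S}(x)^{0.53}\leq n^{0.53}|W|$; on $B$ one exploits that $h_{S}(x)$ is a nonnegative integer and $0.53<1$, so $h_{S}(x)^{0.53}\leq h_{S}(x)$, and hence $\sum_{x\in B} h_{S}(x)^{0.53}\leq\sum_{x\in B} h_{S}(x) = |\nabla(B,A)| = |\nabla(A,B)|$. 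Adding the two estimates and combining with the lower bound from the first display gives the claim.

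The only nontrivial choice in the argument is to apply Corollary~\ref{cor:quad large measure} to $S=B\cup W$ rather than to $A\cup W$. Indeed, only the former set has measure exactly $1/2$, which makes $2\mu(S)(1-\mu(S))=1/2$ and produces the sharp constant $2^{n-1}$. Applying the corollary to $A\cup W$ instead would give $2\mu(A\cup W)(1-\mu(A\cup W)) = \tfrac{1}{2}-2\mu(W)^{2}$, leading to a quadratic error term in $|W|$ and hence to a strictly weaker inequality. This is precisely the elementary scheme used by Kahn--Park to deduce the $\beta=\log_{2}(3/2)$ version of \eqref{eq:conj n beta} from \eqref{kpp1}; here it is driven by the improved exponent $0.53$ afforded by Corollary~\ref{cor:quad large measure}.
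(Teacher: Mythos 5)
Your proposal is correct and follows essentially the same route as the paper: apply the lower bound to the complementary set $B\cup W$ (of measure $1/2$), split $\mathbb{E}\,h_{B\cup W}^{0.53}$ into the contributions of $B$ and $W$, and bound them by $|\nabla(A,B)|/2^{n}$ (using $h^{0.53}\le h$ on integers) and $n^{0.53}\mu(W)$ respectively. The only cosmetic difference is that you invoke Corollary~\ref{cor:quad large measure} while the paper uses Theorem~\ref{thm:cubic} directly via $P(1/2)=1/2$, which is the same bound at measure $1/2$.
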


\medskip

\subsection{Two-sided boundary}\label{intro:two-sided}
Given $A \subseteq \{0,1\}^n$, define the two-sided boundary function $w_A:\{0,1\}^n \to \R$ by $w_A(x)=h_A(x)$ if $x \in A$ and $w_A(x)=h_{A^c}(x)$ if $x \in A^c$. We remark that one can obtain bounds for $\mathbb{E} w_A^\beta$ from those for $\mathbb{E} h_A^\beta$ noting that $w_A^\beta = h_A^\beta + h_{A^c}^\beta$. Indeed, if there is a function $B:[0,1] \to \R$ such that $\mathbb{E}h_A^\beta \geq B(\mu(A))$ holds for all $A \subseteq \{0,1\}^n$, then $\mathbb{E}w_A^\beta  \geq B(\mu(A)) + B(1-\mu(A)).$
In particular, using Theorem \ref{mth11} one obtains
\[
\inf_{n \geq 1, \mu(A) = t} \mathbb{E}w_A^{\beta_0} \geq 2 t^{*} (\log_2(1/t^{*}))^{\beta_0}, \qquad \beta_0=\log_2(3/2),
\]
and combining Theorem \ref{mth11} with Theorem \ref{thm:cubic} one has
\[
\inf_{n \geq 1, \mu(A) = t} \mathbb{E}w_A^{\beta} \geq t^* (\log_2(1/t^*))^{\beta}  +  P(t_*)\qquad \text{ for } \,\,\beta > \log_2(3/2),
\]
where $t_*:=\max(t,1-t)$ and $P$ is the cubic polynomial on the right-hand side of \eqref{eq: cubic}. In the particular case $\mu(A)=1/2$ we obtain sharp lower bounds for all $\beta \geq 0.53$ using Corollary \ref{cor:quad large measure}. 
\begin{corollary}\label{kubiki}
For all $\beta \geq 0.53$,
\[
\inf_{n \geq 1, \mu(A) = 1/2} \mathbb{E}  w_A^{\beta} =  1.
\]
Moreover, for all $0.5 \leq \beta < 0.53$,
\begin{align}\label{twoss}
1 \geq \inf_{n \geq 1, \mu(A) = 1/2} \mathbb{E} w_A^{\beta} \geq   \sqrt{2^{\beta+1}-2}.
\end{align}
\end{corollary}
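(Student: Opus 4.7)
The plan is to reduce both statements to the one-sided bounds already established, using the pointwise identity $w_A^\beta = h_A^\beta + h_{A^c}^\beta$ (valid because $h_A$ and $h_{A^c}$ are supported on the disjoint sets $A$ and $A^c$), and then to match the resulting lower bounds by an explicit test case. The matching set is a subcube of codimension one: taking $A = \{x \in \{0,1\}^n : x_1 = 0\}$, every $x \in A$ has exactly one neighbour in $A^c$ and vice versa, so $h_A = \mathbbm{1}_A$, $h_{A^c} = \mathbbm{1}_{A^c}$, and therefore $w_A \equiv 1$. Consequently $\mathbb{E} w_A^\beta = 1$ for every $\beta > 0$, which already supplies the upper bound $\inf \leq 1$ in both regimes.

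For the lower bound in the range $\beta \geq 0.53$, I would use that $h_A$ is integer-valued, so the elementary inequality $t^\beta \geq t^{0.53}$ for $t \in \{0\}\cup[1,\infty)$ gives $h_A^\beta \geq h_A^{0.53}$ pointwise, and likewise for $h_{A^c}$. Since $\mu(A) = \mu(A^c) = 1/2$, Corollary \ref{cor:quad large measure} applies to both $A$ and $A^c$ and yields $\mathbb{E}h_A^{0.53} \geq 1/2$ and $\mathbb{E}h_{A^c}^{0.53} \geq 1/2$. Adding these through the decomposition $\mathbb{E}w_A^\beta = \mathbb{E}h_A^\beta + \mathbb{E}h_{A^c}^\beta$ gives $\mathbb{E}w_A^\beta \geq 1$, matching the upper bound and establishing the equality $\inf = 1$.

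For the second assertion, fix $0.5 \leq \beta < 0.53$ and apply Theorem \ref{thm:quadratic} to both $A$ and $A^c$, each of which has measure $1/2$. Each application reads $\mathbb{E}h_A^\beta \geq C_\beta/4$ with $C_\beta = 2\sqrt{2^{\beta+1}-2}$; summing via the same decomposition produces $\mathbb{E}w_A^\beta \geq C_\beta/2 = \sqrt{2^{\beta+1}-2}$, together with the upper bound of $1$ from the half-cube example above.

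There is essentially no obstacle in this argument: the entire proof is bookkeeping built on top of the earlier theorems. The only substantive observation is the integer-valuedness of $h_A$, which justifies the pointwise monotonicity $h_A^\beta \geq h_A^{0.53}$ and is the mechanism that propagates the sharp bound at the critical exponent $\beta = 0.53$ to all $\beta \geq 0.53$ without loss.
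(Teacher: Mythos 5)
Your proof is correct and follows essentially the same route as the paper: the pointwise decomposition $w_A^\beta = h_A^\beta + h_{A^c}^\beta$, Corollary \ref{cor:quad large measure} applied to both $A$ and $A^c$ (with the integer-valuedness of $h_A$ handling $\beta > 0.53$), Theorem \ref{thm:quadratic} for the range $0.5 \leq \beta < 0.53$, and the half-cube as the matching upper bound. Your explicit use of $h_A \in \{0,1,2,\dots\}$ to get $h_A^\beta \geq h_A^{0.53}$ is a detail the paper leaves implicit, and it is exactly the right justification.
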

The upper bound $1 \geq \inf_{n \geq 1, \mu(A) = 1/2} \mathbb{E} w_A^{\beta}$  comes from testing on a half-cube, and the non-sharp lower bound in (\ref{twoss})  in the range $0.5 \leq \beta < 0.53$ follows from Theorem \ref{thm:quadratic}. 
This improves over a recent result in 
\cite[Section 3.6]{INV} which says that 
\begin{align*}
1\geq \inf_{n\geq 1, \; \mu(A)=1/2}\mathbb{E} w_{A}^{\beta} \geq \max 
\left\{ \sqrt{\frac{2}{\pi}}, s_{(2\beta)'}^{2\beta} \right\}\quad \quad \text{for  $\,\,1/2\leq \beta \leq 1$}.
\end{align*}
Here, for any $1 \leq q < \infty$, $q'=\frac{q}{q-1}$, and $s_{q}$ denotes the smallest positive zero of the confluent hypergeometric functions $x \mapsto\ _{1}F_{1}\left(-\frac{q}{2}, \frac{1}{2}, \frac{x^{2}}{2}\right)$. 
 We remark that $s_{p'}\geq \sqrt{\frac{2(p-1)}{p}}$ for all $p \in [1,2]$ and $s_{p'} \to 0$ as $p \to 1$.  Corollary~\ref{kubiki} for $\beta=1/2$ gives
 $$
 \inf_{n \geq 1, \mu(A) = 1/2} \mathbb{E} \sqrt{w_A}\geq \sqrt{2^{3/2}-2} =    0.91...>\sqrt{\frac{2}{\pi}}=0.79... .
 $$

\begin{remark}
Let $C_{1}$ be the largest constant in the $L^{1}$ Poincar\'e inequality 
\begin{align}\label{l1p}
C \mathbb{E} |f-\mathbb{E}f| \leq \mathbb{E} |\nabla f|
\end{align}
 for all $f: \{-1,1\}^{n} \mapsto \mathbb{R}$ and all $n\geq 1$ (see Section~\ref{taltal1} for the meaning of $|\nabla f|$). Let $C_{1,B}$ be the largest constant in (\ref{l1p}) for all $f :\{-1,1\}^{n} \mapsto \{0,1\}$. Clearly $C_{1}\leq C_{1,B}$. It was proved in \cite{PVR} that $\frac{2}{\pi}<C_{1} \leq \sqrt{\frac{2}{\pi}}$, and it is believed that $C_{1}=\sqrt{\frac{2}{\pi}}$. One may suspect that $C_{1} = C_{1,B}$. However,  notice that for an indicator function $f=1_{A}$, $A \subset \{-1,1\}^{n}$, we have 
 \begin{align*}
 \frac{\mathbb{E} |\nabla f|}{\mathbb{E} |f-\mathbb{E}f|} = \frac{\mathbb{E} \sqrt{w_{A}}}{4 \mu(A)(1-\mu(A))};
 \end{align*}
 see Section~\ref{taltal1} for the relation between $|\nabla f|$ and $w_{A}$. On the other hand, Theorem~\ref{thm:quadratic} combined with the fact $\sqrt{w_{A}} = \sqrt{h_{A}}+\sqrt{h_{A^{c}}}$ shows $\sqrt{2^{3/2}-2} \leq C_{1,B}$ which implies  $C_{1,B}-C_{1} \geq\sqrt{2^{3/2}-2}  - \sqrt{2/\pi} =  0.112...>0$. 
\end{remark}
\subsection{Talagrand's isoperimetric inequalities for the discrete gradient}\label{taltal1}

It will be convenient to work with $\{-1,1\}^{n}$ instead of $\{0,1\}^{n}$.\footnote{Note that $h_A$ and $w_A$ can be defined analogously for sets $A \subset \{-1,1\}^n$.}  For any $f:\{-1,1\}^{n} \mapsto \mathbb{R}$ and any $x = (x_{1},, \ldots, x_{n})\in \{-1,1\}^{n}$ we set 
\begin{align*}
D_{j}f(x) = \frac{f(x_{1}, \ldots, x_{j}, \ldots, x_{n})-f(x_{1}, \ldots, -x_{j}, \ldots, x_{n})}{2}.
\end{align*}
We say $f$ is boolean if $f$ takes values $0$ or $1$. Define 
\begin{align*}
|\nabla f(x)| = \sqrt{\sum_{j=1}^{n} |D_{j} f|^{2}}, \quad \text{and} \quad |M f(x)| = \sqrt{\sum_{j=1}^{n} (D_{j} f)_{+}^{2}},
\end{align*}
where $(a)_{+} = \max\{0,a\}$ for any $a \in \mathbb{R}$.
For any $A \subset \{-1,1\}^{n}$,  let  $1_{A}$  denote the indicator function of the set $A$.
Notice that if $x \notin A$ then $|M 1_{A}(x)|^{2}=0$, and if $x \in A$ then $|M 1_{A}(x)|^{2}$ equals  1/4 times   the number of neighbors of $x$ in $A^{c}$.  Thus $|M 1_{A}(x)|^{2} = h_{A}(x)/4$. On the other hand, the ``two-sided'' gradient $|\nabla 1_{A}|^{2}$ takes the form 
$|\nabla 1_{A}(x)|^{2} =h_{A}(x)/4 + h_{A^{c}}(x)/4=w_A(x)/4$. 

In this section we will be concerned with functional isoperimetric inequalities which are not sharp in terms of constants but capture the correct behaviour of the quantities involved. To this end, we introduce the notation $A \gtrsim B$ to mean that $A \geq C B$ with some universal constant $C>0$. 

 Any function $f :\{-1,1\}^{n} \to\mathbb{R}$ has a Fourier--Walsh representation 
 \begin{align*}
f(x) = \sum_{S \subset \{1,\ldots, n\}} \hat{f}(S) x^{S}, \quad x^{S} = \prod_{j \in S} x_{j}.
 \end{align*}
Let $p\geq 1$, and  define $\|f\|_{p} = (\mathbb{E} |f|^{p})^{1/p}$. Recently, using random restriction arguments in an elegant way,  Eldan--Kindler--Lifshitz--Minzer obtained the following.
\begin{theorema}[\cite{Dor}]\label{dor11}
For any $f : \{-1,1\}^{n} \mapsto \{-1,1\}$, and any $p \in [1,2]$,  we have 
\begin{align}\label{fbound}
\|\nabla f\|_{p} \gtrsim \sup_{d\geq 0} \left(\sum_{|S|\geq d} |\hat{f}(S)|^{2}\right)^{1/p} \sqrt{d}.
\end{align}
\end{theorema}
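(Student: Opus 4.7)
My plan is to follow the random restriction approach of Eldan, Kindler, Lifshitz and Minzer. Fix an integer $d\geq 1$ and set $\rho := 1/d$. Sample a random $J\subseteq [n]$ by including each coordinate independently with probability $\rho$, together with an independent uniformly random $y\in\{-1,1\}^{[n]\setminus J}$, and let $g = f_{J,y}:\{-1,1\}^J\to\{-1,1\}$ be the resulting Boolean restriction. The idea is that this restriction preserves the Fourier mass of $f$ at levels $\geq d$ (which becomes the variance of $g$), while the average $L^p$-gradient of $g$ is at most $\rho^{p/2}\|\nabla f\|_p^p$. Applying the paper's isoperimetric inequalities to the Boolean function $g$ then converts the variance back into the desired $L^p$ lower bound.

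The first step is Fourier mass preservation. Writing $\widehat{g}(S) = \sum_{T:T\cap J=S}\widehat{f}(T)\,y^{T\setminus J}$ for $S\subseteq J$, the orthogonality relations over $y$ give
\[
\mathbb{E}_{J,y}\,\mathrm{Var}(g) \;=\; \sum_T|\widehat{f}(T)|^2\bigl(1-(1-\rho)^{|T|}\bigr) \;\geq\; (1-e^{-1})\sum_{|T|\geq d}|\widehat{f}(T)|^2,
\]
using $(1-\rho)^{|T|}\leq e^{-\rho|T|}\leq e^{-1}$ whenever $|T|\geq d=1/\rho$. Next, $g$ is Boolean, so with $A := g^{-1}(1)\subseteq\{-1,1\}^J$ one has $|\nabla g(x)|^2 = w_A(x)$ pointwise, and $h_A,\,h_{A^c}$ have disjoint support. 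Hence
\[
\|\nabla g\|_p^p \;=\; \mathbb{E}\,h_A^{p/2}+\mathbb{E}\,h_{A^c}^{p/2}\;\geq\; \mathbb{E}\sqrt{h_A}+\mathbb{E}\sqrt{h_{A^c}} \;\gtrsim\; \mu(A)(1-\mu(A))\;\sim\;\mathrm{Var}(g),
\]
where the middle inequality uses the pointwise bound $h_A^{p/2}\geq h_A^{1/2}$, valid on the non-negative integers since $p\in[1,2]$, and the last step is Theorem \ref{thm:quadratic} applied at $\beta=1/2$ (to both $A$ and $A^c$). Taking expectations yields $\mathbb{E}\bigl[\|\nabla g\|_p^p\bigr] \gtrsim \sum_{|T|\geq d}|\widehat{f}(T)|^2$.

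For the reverse estimate, observe that for $i\in J$ one has $D_i g(x)=D_i f(z)$, where $z\in\{-1,1\}^n$ is the gluing of $x\in\{-1,1\}^J$ with $y$. As $(x,y)$ ranges uniformly, $z$ is uniform on $\{-1,1\}^n$ and independent of $J$. Conditioning on $z$ and writing $\xi_i=\bbone_{\{i\in J\}}$,
\[
\mathbb{E}\bigl[\|\nabla g\|_p^p\bigr] \;=\; \mathbb{E}_z\,\mathbb{E}_\xi\Bigl(\sum_i |D_i f(z)|^2\xi_i\Bigr)^{p/2}\;\leq\;\rho^{p/2}\,\mathbb{E}_z\|\nabla f(z)\|_2^p \;=\; \rho^{p/2}\|\nabla f\|_p^p,
\]
the inequality being Jensen's for the concave map $t\mapsto t^{p/2}$, valid since $p/2\leq 1$. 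Combining the two displayed estimates with $\rho = 1/d$ and taking $p$-th roots gives $\|\nabla f\|_p \gtrsim \bigl(\sum_{|T|\geq d}|\widehat{f}(T)|^2\bigr)^{1/p}\sqrt{d}$, as claimed.

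The routine pieces are the Fourier computation in the second paragraph and the Jensen bound in the third; the substantive inputs are the Boolean isoperimetric inequality $\|\nabla g\|_p^p\gtrsim \mathrm{Var}(g)$, furnished by Theorem \ref{thm:quadratic}, and the choice of restriction scale $\rho=1/d$, tuned so that Fourier mass above level $d$ is retained while the gradient contracts by $\rho^{p/2}$. The tension between these two scales is what produces the exponent $1/p$ on the Fourier mass in the final bound and is, to my mind, the conceptual heart of the random restriction method.
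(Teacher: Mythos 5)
Your argument is correct, but it is not the route this paper takes: you have essentially reconstructed the random restriction proof of Eldan--Kindler--Lifshitz--Minzer, with the scalar isoperimetric input supplied by Theorem~\ref{thm:quadratic} at $\beta=1/2$ (Talagrand's bound \eqref{tt2} or the Bobkov--G\"otze constant would serve equally well there). All the steps check out: the Fourier-mass computation $\mathbb{E}_{J,y}\mathrm{Var}(g)=\sum_T|\hat f(T)|^2(1-(1-\rho)^{|T|})$, the identification $|\nabla g|^2=w_A$ for $\pm1$-valued $g$ together with $h_A^{p/2}\geq h_A^{1/2}$ on the integers, and the Jensen contraction $\mathbb{E}_\xi\bigl(\sum_i|D_if(z)|^2\xi_i\bigr)^{p/2}\leq\rho^{p/2}|\nabla f(z)|^p$, so the combination does give \eqref{fbound}. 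The paper, by contrast, deliberately avoids random restrictions: it deduces \eqref{fbound} from the noise-stability inequality of Theorem~\ref{nstabilityt} (proved via the Ivanisvili--van Handel--Volberg semigroup identity \eqref{ihv1} and a cotype estimate), combined with the elementary spectral bound $\|f-e^{-\Delta/d}f\|_p\gtrsim\bigl(\sum_{|S|\geq d}\hat f(S)^2\bigr)^{1/p}$ for Boolean $f$ and Khintchine's inequality to pass between $\|Df\|_p$ and $\|\nabla f\|_p$. The trade-off is instructive: your restriction argument is more elementary and leans only on a quadratic isoperimetric bound, but it uses Booleanity of the restricted function in an essential way (both in $w_A^{p/2}\geq w_A^{1/2}$ and in the isoperimetric step), so it cannot give the vector-valued extension; the paper's semigroup approach is exactly what allows Theorems~\ref{vectortal} and \ref{nstabilityt} to hold for functions with values in any Banach space of finite cotype, which is the main new point of that section.
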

It was also explained in \cite{Dor} that essentially by choosing $d  \approx \log \frac{e}{\mathrm{Var}(f)}$,  the inequality (\ref{fbound}) implies Talagrand's  sharp isoperimetric inequality,
\begin{align}\label{tala11}
\|\nabla f\|_{p} \gtrsim (\mathrm{Var}(f))^{1/p} \sqrt{\log \frac{e}{\mathrm{Var}(f)}}
\end{align}
for all $f : \{-1,1\}^{n} \mapsto \{-1,1\}$, and by choosing $d \approx \log \frac{e}{W(f)}$, where $W(f) = \sum_{j=1}^{n} (\mathbb{E} |D_{j}f|)^{2}$,  the inequality (\ref{fbound})  implies  Talagrand's conjecture\footnote{The case $p \in (1,2]$ was known to Talagrand, and he  was asking the question  in the endpoint case $p=1$.}
\begin{align}\label{tac1}
\|\nabla f\|_{p} \gtrsim (\mathrm{Var}(f))^{1/p}  \sqrt{\log \frac{e}{W(f)}}
\end{align}
for all $f : \{-1,1\}^{n} \mapsto \{-1,1\}$, which was recently resolved in \cite{ELGR}. Notice that for {\em monotone} boolean functions $f$, i.e., $f(x) \geq f(y)$ whenever $x_{i}\geq y_{i}$ for all $i=1,\ldots, n$, we have 
$$
W(f) = \sum_{j=1}^{n} \hat{f}(\{j\})^{2} \leq \sum_{S \neq \emptyset} \hat{f}(S)^{2} = \mathrm{Var}(f).
$$

\begin{remark}
We should point out that the inequality (\ref{tac1}) for $p=1$ can be obtained from Theorem 31 in \cite{KDS}, which states that 
$\mathbb{NS}_{\delta}(f) \lesssim \sqrt{\delta}\,  \mathbb{E} |\nabla f|$ holds for all $\delta \in [0,1]$ and all $f :\{-1,1\}^{n} \to \{-1,1\}$, where $\mathbb{NS}_{\delta}(f)$ is the {\em noise sensitivity} of $f$.  Using the bound  $\mathbb{NS}_{\delta}(f) \gtrsim \mathrm{Var}(f) (1-\left(W(f)/\mathrm{Var}(f)\right)^{\frac{\delta}{2-\delta}})$ from \cite{kindler}, and choosing $\delta = \frac{1}{\log(\mathrm{Var}(f)/W(f))}$, where we can  assume $\mathrm{Var}(f)/W(f)>100$ by (\ref{tala11}), we obtain 
$\mathbb{E} |\nabla f| \gtrsim \mathrm{Var}(f) \sqrt{\log \frac{\mathrm{Var}(f)}{W(f)}}$. Invoking again (\ref{tala11}) we can assume $\mathrm{Var}(f) \geq \sqrt{W(f)}$ which leads us to one more proof of  (\ref{tac1}).
\end{remark}

In this paper we present new arguments in obtaining (\ref{fbound}) which unlike random restriction methods \cite{Dor} or inductive proofs \cite{KDS} extend the results to functions with values in an arbitrary Banach space having finite cotype. Recall that a Banach space $(X,\|\cdot \|)$ has Rademacher cotype $q \in [2,\infty]$ if  there exists a finite $C$ such that
\begin{align*}
 \sum_{j=1}^{n}\|x_{j}\|^{q} \leq C^{q} \mathbb{E} \|\sum_{j=1}^{n} \varepsilon_{j} x_{j} \|^{q}
\end{align*}
holds for all $n\geq 1$,  and all  $x_{1},\ldots, x_{n} \in X$, where $\varepsilon_{1}, \ldots, \varepsilon_{n},$ are independent identically distributed symmetric $\pm1$ Bernoulli random variables. The best constant $C=C_{q}(X)$ is called the cotype $q$ constant of $X$.   
 Let $\Delta  = \sum_{j=1}^{n} D_{j}$, and let $e^{-t\Delta}$ be the heat semigroup.  
\begin{theorem}\label{vectortal}
Let $f:\{-1,1\}^{n} \to X$, $\|f\|_{\infty}\leq 1$. Then for any $p\in [1,2]$ we have 
\begin{align*}
 \| Df\|_{p}\gtrsim  \frac{1}{q^{3/2}C_{q}(X)}\;  \|f\|_{2}^{2/p} \left(\log  \frac{e\|f\|_{2}}{\|f\|_{1}} \right)^{1/q},  
\end{align*}
where $\| Df\|_{p}^{p} = \mathbb{E} \| \sum_{1\leq j \leq n} x'_{j} D_{j} f(x)\|^{p}$ and $x'$ is independent copy of $x$.
\end{theorem}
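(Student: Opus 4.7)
The plan is to combine three ingredients. First, a cotype-based reduction that controls $\|Df\|_p$ from below in terms of the $\ell^q$-valued gradient $G(x) := \bigl(\sum_j \|D_j f(x)\|^q\bigr)^{1/q}$ in $L^p(x)$. Second, a Banach-valued semigroup inequality of level-$d$ type, producing an upper bound on $\|f - T_t f\|_p$ by $t^{1/q}\,\|G\|_p$ up to constants depending on $q$. Third, scalar hypercontractivity applied to the bounded function $g(x) := \|f(x)\| \in [0,1]$ to fix the cutoff time $t$; this circumvents the fact that Bonami--Beckner hypercontractivity fails for functions valued in general Banach spaces.

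For the first step, for each fixed $x$ the random variable $\sum_j x'_j D_j f(x)$ is a Rademacher series in $X$; cotype $q$ yields $\sum_j \|D_j f(x)\|^q \leq C_q(X)^q \,\mathbb{E}_{x'}\|\sum_j x'_j D_j f(x)\|^q$, and the Kahane--Khintchine inequality (with constant of order $\sqrt{q}$) converts the $L^q(x')$-norm on the right into the $L^p(x')$-norm, so that taking $L^p$ in $x$ produces $\|G\|_p \leq C\sqrt{q}\,C_q(X)\,\|Df\|_p$. For the third step, Bonami--Beckner applied to $g$ gives $\|T_t g\|_2 \leq \|g\|_{1+e^{-2t}} \leq \|f\|_1^{1/(1+e^{-2t})}$; convexity of the norm in $X$ yields the pointwise domination $\|T_t f(x)\| \leq T_t g(x)$, hence $\|T_t f\|_2 \leq \|T_t g\|_2$. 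Choosing the smallest $t$ for which $\|T_t g\|_2 \leq \tfrac12 \|f\|_2$ (in the nontrivial regime this $t$ is of order $1/\log(e\|f\|_2/\|f\|_1)$; the complementary regime $\log(e\|f\|_2/\|f\|_1)=O(1)$ is handled by an elementary Poincar\'e-type argument), we deduce $\|f - T_t f\|_2 \geq \tfrac12 \|f\|_2$, and interpolation with $\|f - T_t f\|_\infty \leq 2$ gives $\|f - T_t f\|_p \gtrsim \|f\|_2^{2/p}$ for $p \in [1,2]$.

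The main obstacle is the middle step: establishing a Banach-valued level-$d$ inequality of the form
$$
\|f - T_t f\|_p \leq C\,q\,t^{1/q}\,\|G\|_p,
$$
in which the exponent $1/q$ replaces the scalar exponent $1/2$ so as to encode the cotype $q$ assumption. For scalar-valued functions this is essentially the level-$d$ Bonami--Beckner inequality exploited by Eldan--Kindler--Lifshitz--Minzer through random restrictions, a technique unavailable in general Banach spaces. The Banach-valued substitute should instead follow directly from the heat semigroup: write $f - T_t f = \int_0^t (-\partial_s T_s f)\,ds$, decompose the time derivative into the coordinate contributions $\sum_j D_j T_s f$, and apply cotype $q$ coordinate-wise together with a Kahane--Khintchine-type estimate, tracking constants carefully to produce the $q\,t^{1/q}$ factor. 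Combining the three steps with $t \approx 1/\log(e\|f\|_2/\|f\|_1)$ then yields the claimed $(\log(e\|f\|_2/\|f\|_1))^{1/q}$ factor and the overall $q^{3/2}\,C_q(X)$ dependence in the constant.
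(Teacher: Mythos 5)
Your first and third steps are fine and essentially parallel the paper's endgame: the pointwise domination $\|e^{-t\Delta}f(x)\|\le e^{-t\Delta}\|f\|(x)$ plus scalar hypercontractivity and a cutoff time with $1-e^{-2t}\approx 1/\log(e\|f\|_2/\|f\|_1)$, combined with $\|f-e^{-t\Delta}f\|_\infty\le 2$, is exactly how the paper converts a noise-sensitivity estimate into the stated bound. The genuine gap is your middle step, and it is not merely unproven: the inequality you propose, $\|f-e^{-t\Delta}f\|_p\le C\,q\,t^{1/q}\,\|G\|_p$ with $G=(\sum_j\|D_jf\|^q)^{1/q}$, is false for $q>2$. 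Take $X=\mathbb{R}$ (which has cotype $q$ with constant $1$, so any inequality ``encoding cotype $q$'' must cover it) and $f(x)=x_1+\cdots+x_n$: then $f-e^{-t\Delta}f=(1-e^{-t})f$ has $L^p$-norm $\asymp(1-e^{-t})\sqrt{n}$ for $p\in[1,2]$, while $\|G\|_p=n^{1/q}$, so the bound fails as $n\to\infty$ for every fixed $t>0$ (a Boolean variant fails as well: for the Hamming ball of measure $1/2$ and $p=1$ one has $\|G\|_1\asymp n^{1/q-1/2}\to 0$ while $\|f-e^{-t\Delta}f\|_1\asymp 1$). The structural defect in your heuristic is that cotype gives a \emph{lower} bound for randomized sums in terms of $\ell^q$-sums of norms; it cannot upper-bound the unsigned sum $\sum_j D_j e^{-s\Delta}f$, and nothing in ``coordinate-wise cotype plus Kahane--Khintchine'' produces the integrable singularity $s^{1/q-1}$ needed to generate $t^{1/q}$ after integration.

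What the paper proves instead (Theorem~\ref{nstabilityt}) keeps the randomized gradient on the right: $\|f-e^{-t\Delta}f\|_p\lesssim C_q(X)\,q^{3/2}\,(1-e^{-2t})^{1/q}\,\|Df\|_p$. Note that your step 1 gives $\|G\|_p\lesssim \sqrt{q}\,C_q(X)\,\|Df\|_p$, so your proposed middle inequality is strictly stronger than the paper's, which is precisely why it breaks; the reduction to $G$ should simply be dropped. The proof of Theorem~\ref{nstabilityt} is not a direct differentiation of the semigroup of the kind you sketch: it uses the identity \eqref{ihv1} from \cite{PRS}, which represents $-\frac{d}{dt}e^{-t\Delta}f(x)$ as $(e^{2t}-1)^{-1/2}$ times an average of sums $\sum_j\delta_j(t)D_jf(\xi(t)x)$ with \emph{normalized biased} signs, and then invokes the nontrivial comparison theorem (Theorem 4.1 in \cite{PRS}) bounding such biased-sign sums by symmetric Rademacher sums in a cotype-$q$ space with constant $\sqrt{q}\,C_q(X)$ and a bias-dependent factor; this is what turns the non-integrable weight $(e^{2s}-1)^{-1/2}$ into the integrable $(1-e^{-2s})^{1/q-1}$, and integrating in $s$ yields the remaining factor $q$ and $(1-e^{-2t})^{1/q}$. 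To complete your argument you would need to either cite or reprove this biased-versus-symmetric comparison (which is where the cotype hypothesis actually enters); as written, the central step of your plan is unsupported and, in the $\|G\|_p$ formulation, false.
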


\begin{remark}
If $f$ is boolean, then one can apply the theorem to $f-\mathbb{E}f$ and use Khintchine's inequality to recover Talagrand's isoperimetric inequality (\ref{tala11}).
\end{remark}

Theorem~\ref{vectortal} will be obtained as a consequence of the following ``noise sensitivity'' inequality for vector valued functions. 
\begin{theorem}\label{nstabilityt}
For all $p\in [1,2]$ the inequality
\begin{align}\label{nstability}
\|f-e^{-t\Delta}f\|_{p}   \lesssim C_{q}(X) q^{3/2} (1-e^{-2t})^{\frac{1}{q}} \|Df\|_{p}, \quad t\geq 0,
\end{align}
holds for all  $f:\{-1,1\}^{n} \to X$. 
\end{theorem}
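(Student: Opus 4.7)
The plan is to combine a semigroup representation for $T_s := e^{-s\Delta}$ with the cotype $q$ property of $X$ and a vector-valued smoothing estimate of square-function type. I begin with the fundamental identity
\[
f - T_t f \,=\, \int_0^t \Delta T_s f\,ds \,=\, \int_0^t \sum_{j=1}^n D_j T_s f\,ds,
\]
and use the factorization $D_j f(x) = x_j\, \partial_j f(x_{\neq j})$, where $\partial_j f(x_{\neq j}) := \tfrac12\bigl[f\big|_{x_j=1} - f\big|_{x_j=-1}\bigr]$ is independent of $x_j$. Because $T_s$ acts coordinatewise and $T_s[x_j] = e^{-s} x_j$, this yields
\[
\Delta T_s f(x) \,=\, e^{-s}\sum_{j=1}^n x_j\, T_s \partial_j f(x_{\neq j}).
\]

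Next, I bring $\|Df\|_p$ into play via cotype. Writing $\epsilon_j := x_j x'_j$, which is an i.i.d.\ Rademacher sequence independent of $x$, one has
\[
\|Df\|_p^p \,=\, \mathbb{E}_{x,\epsilon} \Big\|\sum_{j=1}^n \epsilon_j\, \partial_j f(x_{\neq j})\Big\|^p.
\]
Applying the cotype $q$ inequality pointwise in $x$, followed by Kahane's inequality which converts the $L^q(\epsilon)$-norm to an $L^p(\epsilon)$-norm at cost $\lesssim\sqrt{q}$, and integrating in $x$, gives
\[
\Big\|\Big(\sum_j \|\partial_j f(x_{\neq j})\|^q\Big)^{1/q}\Big\|_{L^p(x)} \,\lesssim\, C_q(X)\, \sqrt{q}\, \|Df\|_p. \qquad (\star)
\]

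The main technical step is a vector-valued smoothing estimate
\[
\Big\|\sum_j x_j\, T_s \partial_j f(x_{\neq j})\Big\|_{L^p(X)} \,\lesssim\, (1-e^{-2s})^{1/q-1}\, \Big\|\Big(\sum_j \|\partial_j f(x_{\neq j})\|^q\Big)^{1/q}\Big\|_{L^p(x)}, \qquad (\diamond)
\]
which serves as the cotype-$q$ analog of the scalar Plancherel bound $|S|^2 e^{-2s|S|} \leq (1-e^{-2s})^{-1}|S|$ on Fourier–Walsh multipliers. The plan is to prove $(\diamond)$ by (i) decoupling the signs $\{x_j\}$ through introducing an independent Rademacher sequence, exploiting that $T_s\partial_j f(x_{\neq j})$ is independent of $x_j$; (ii) leveraging the coordinate-factored smoothing $T_s = \prod_i T_s^{(i)}$ on each summand; and (iii) closing the estimate with a further application of cotype and Kahane in the Banach-valued setting.

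Combining $(\star)$, $(\diamond)$, and Minkowski's integral inequality,
\[
\|f - T_t f\|_p \,\leq\, \int_0^t \|\Delta T_s f\|_p\,ds \,\lesssim\, C_q(X)\,\sqrt{q}\,\|Df\|_p \int_0^t e^{-s}(1-e^{-2s})^{1/q-1}\,ds.
\]
The substitution $u = 1 - e^{-2s}$ shows $\int_0^t e^{-s}(1-e^{-2s})^{1/q-1}\,ds \lesssim q\,(1-e^{-2t})^{1/q}$, which produces the claimed bound $C_q(X)\,q^{3/2}\,(1-e^{-2t})^{1/q}\,\|Df\|_p$. The hardest step is the smoothing estimate $(\diamond)$: in the scalar setting it reduces to a one-line Plancherel computation, whereas in a general Banach space of cotype $q$ it demands a delicate interplay between symmetrization of the hypercube sign variables, the coordinate-factored structure of the noise semigroup, and the Banach-valued Kahane inequality in order to extract the correct $q$-dependent weight.
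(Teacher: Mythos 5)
Your outer skeleton is sound and in fact mirrors the paper's: the identity $f-e^{-t\Delta}f=\int_0^t\Delta e^{-s\Delta}f\,ds$, the factorization $\Delta e^{-s\Delta}f=e^{-s}\sum_j x_jT_s\partial_jf(x_{\neq j})$, the estimate $(\star)$ via cotype plus Kahane (correct, with the observation that $\epsilon_j=x_jx_j'$ is an independent Rademacher sequence), and the final computation $\int_0^te^{-s}(1-e^{-2s})^{1/q-1}ds\lesssim q\,(1-e^{-2t})^{1/q}$ are all fine and would give the stated bound \emph{if} the smoothing estimate $(\diamond)$ were available. But $(\diamond)$ is the entire content of the theorem, and you do not prove it; you only list a three-item plan. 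Worse, the first item of that plan --- decoupling the signs $x_j$ from $T_s\partial_jf(x_{\neq j})$ by introducing an independent Rademacher sequence, as a stand-alone first move with universal constants --- cannot work in a general Banach space. If one could bound $\bigl\|\sum_j x_j g_j(x_{\neq j})\bigr\|_{L^p(x)}\lesssim\bigl\|\sum_j x_j' g_j(x_{\neq j})\bigr\|_{L^p(x,x')}$ for arbitrary $X$ and arbitrary $g_j$ not depending on $x_j$, then (since $T_s$ commutes with this structure and is an $L^p$-contraction) one would obtain $\|f-e^{-t\Delta}f\|_p\lesssim\|Df\|_p$ with a dimension-free constant, no cotype and no $(1-e^{-2s})^{1/q-1}$ loss at all; letting $t\to\infty$ this is Pisier's inequality with a universal constant for every Banach space, which is known to be false (Talagrand showed the $\log n$ factor is necessary in general). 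So the decoupling must be intertwined with the cotype hypothesis and with the smoothing of $T_s$, and that interplay is exactly what is missing from your sketch: steps (ii)--(iii) never explain how the weight $(1-e^{-2s})^{1/q-1}$ is extracted in the absence of Plancherel.

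For comparison, the paper fills precisely this hole by quoting two results from \cite{PRS}: the exact derivative identity \eqref{ihv1}, which expresses $-\frac{d}{dt}e^{-t\Delta}f(x)$ as $(e^{2t}-1)^{-1/2}\,\mathbb{E}_\xi\sum_j\delta_j(t)D_jf(\xi(t)x)$ with centered, normalized \emph{biased} coefficients $\delta_j(t)$ at a resampled point (so the decoupling is built into the identity rather than asserted), and Theorem 4.1 of \cite{PRS}, which compares such biased sums with symmetric Rademacher sums at the cost $C_q(X)\sqrt{q}\,(1-e^{-2t})^{1/q-1/2}$; combined with the prefactor $(e^{2t}-1)^{-1/2}$ this plays exactly the role of your $(\diamond)$. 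As written, your proposal is therefore a reduction of the theorem to a statement essentially equivalent to that known input, not a proof; to complete it you must either cite these results or supply a genuine argument for $(\diamond)$.
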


\begin{remark}
If $f$ is boolean, then Theorem~\ref{nstabilityt} together with the simple estimate $\|f-e^{-\Delta/d}f\|_{p} \gtrsim \left( \sum_{|S|\geq d} \hat{f}(S)^{2}\right)^{1/p}$, $p \in [1,2]$,  implies (\ref{fbound}).
\end{remark}

\bigskip

\noindent \textit{Acknowledgments}. D.B. was partially supported by the NSF grant DMS-1954479 and the AEI grant RYC2020-029151-I.  P.I was supported in part by  NSF grant CAREER-DMS-2152401.

\section{The proofs}
\subsection{An inductive lemma}

The next lemma is a standard inductive argument which  follows the same steps as in \cite{Tal93, bob1} with the latest upgrade found in \cite{KP}. 

\begin{lemma}[{\cite[pp. 4217-4219]{KP}}]\label{th44}
    Let $\beta \in [1/2,1]$. If $B : [0,1] \to [0,\infty)$ satisfies $B(0)=B(1)=0$, $B(1/2)\leq 1/2$, and the inequality
\begin{align}
&\max\{((y-x)^{1/\beta}+B(y)^{1/\beta})^{\beta}, y-x+(2^{\beta}-1)B(y)\} \nonumber \\
&+B(x)\geq 2B\Big(\frac{y+x}{2}\Big) \label{sami}
\end{align}
    holds for all $0 \leq x \leq y \leq 1$, then $\mathbb{E}h_{A}^{\beta}\geq B(\mu(A))$ for all $A\subset \{0,1\}^{n}$ and all $n\geq 1$. 
\end{lemma}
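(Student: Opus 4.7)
The plan is induction on $n$, decomposing the cube along the last coordinate and reducing everything to two separate lower bounds on the contribution from one of the two levels, each matching one of the terms in the $\max$ in \eqref{sami}.

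For the base case $n=0$, the set $A\subset\{*\}$ has measure $0$ or $1$ and both $\mathbb{E}h_A^\beta$ and $B(\mu(A))$ vanish. The auxiliary hypothesis $B(1/2)\le 1/2$ is what will pay for the first genuinely new case $n=1$, $\mu(A)=1/2$, where $\mathbb{E}h_A^\beta=1/2$.

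For the inductive step I would set $A_i=\{v\in\{0,1\}^{n-1}:(v,i)\in A\}$ for $i\in\{0,1\}$, write $x=\mu_{n-1}(A_0)$, $y=\mu_{n-1}(A_1)$, and by the obvious $0\leftrightarrow 1$ symmetry in the last coordinate assume $x\le y$; then $\mu(A)=(x+y)/2$. A direct inspection gives $h_A(v,0)=h_{A_0}(v)+1_{v\in A_0\setminus A_1}$ for $v\in A_0$ with the analogous identity on the other level, so with $E:=A_1\setminus A_0$,
\[
2\mathbb{E}h_A^\beta \;=\; U+V,\qquad U:=\mathbb{E}_{n-1}(h_{A_0}+1_{A_0\setminus A_1})^\beta,\quad V:=\mathbb{E}_{n-1}(h_{A_1}+1_E)^\beta.
\]
The inductive hypothesis gives $U\ge \mathbb{E}_{n-1}h_{A_0}^\beta\ge B(x)$ essentially for free, so the entire problem is to establish two separate lower bounds on $V$.

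The first, $V\ge ((y-x)^{1/\beta}+B(y)^{1/\beta})^\beta$, I would obtain from the reverse Minkowski inequality on $L^\beta$ (valid for $\beta\le 1$) applied to the nonnegative functions $h_{A_1}$ and $1_E$:
\[
V^{1/\beta}\;\ge\;(\mathbb{E}_{n-1}h_{A_1}^\beta)^{1/\beta}+(\mathbb{E}_{n-1}1_E)^{1/\beta}\;\ge\; B(y)^{1/\beta}+(y-x)^{1/\beta},
\]
where the last step uses the inductive hypothesis on $A_1$ and the elementary estimate $|E|/2^{n-1}=y-|A_0\cap A_1|/2^{n-1}\ge y-x$. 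The second, $V\ge (y-x)+(2^\beta-1)B(y)$, uses that $h_{A_1}$ is integer-valued together with the scalar inequality
\[
(a+1)^\beta\;\ge\;1+(2^\beta-1)\,a^\beta\qquad \text{for every integer }a\ge 0,\ \beta\in(0,1],
\]
which I would verify by noting equality at $a=0,1$ and checking that $\phi(a):=(a+1)^\beta-(2^\beta-1)a^\beta$ is increasing on $[1,\infty)$ (the sign of $\phi'(a)$ follows from $(1+1/a)^{\beta-1}\ge 2^{\beta-1}>2^\beta-1$ for $\beta<1$). Splitting $V=\mathbb{E}_{n-1}[h_{A_1}^\beta 1_{A_0\cap A_1}]+\mathbb{E}_{n-1}[(h_{A_1}+1)^\beta 1_E]$, applying this inequality pointwise on $E$, and using $2^\beta-1\le 1$ to collapse the two $h_{A_1}^\beta$-weighted contributions into a single term bounded below by $(2^\beta-1)\mathbb{E}_{n-1}h_{A_1}^\beta\ge(2^\beta-1)B(y)$, yields the desired bound.

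Combining $U\ge B(x)$ with $V\ge \max\{L_1,L_2\}$ and invoking hypothesis \eqref{sami} gives $U+V\ge 2B(\mu(A))$, completing the induction. The main subtle point I expect is the scalar inequality driving the second bound: it actually \emph{fails} at noninteger $a\in(0,1)$ (already $a=0.1$, $\beta=1/2$ gives a counterexample), so the argument genuinely depends on $h_{A_1}$ taking values in $\{0\}\cup[1,\infty)$; once this feature is used, the rest of the proof reduces to bookkeeping.
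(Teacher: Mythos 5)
Your induction is correct and is essentially the argument the paper relies on (it cites Kahn--Park rather than reproducing the proof): the same coordinate decomposition with the boost kept on the larger-measure half, the reverse Minkowski (superadditivity of $t\mapsto(\mathbb{E}t^{\beta})^{1/\beta}$) giving the first term of the max, and the integer-valued pointwise bound $(a+1)^{\beta}\geq 1+(2^{\beta}-1)a^{\beta}$ giving the second. Your closing observation that this scalar inequality fails for $a\in(0,1)$, so integrality of $h_{A_1}$ is genuinely used, is exactly the right subtle point; note also that $B(1/2)\leq 1/2$ is in fact forced by \eqref{sami} at $(x,y)=(0,1)$, so your $n=1$ case is covered either way.
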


This Lemma will be used in the proofs of  Theorems \ref{mth11}, \ref{thm:quadratic} and \ref{thm:cubic}.

\begin{remark}
Notice that $\max\{((y-x)^{1/\beta}+B(y)^{1/\beta})^{\beta}, y-x+(2^{\beta}-1)B(y)\} = ((y-x)^{1/\beta}+B(y)^{1/\beta})^{\beta}$ if and only if $B(y)\geq y-x$. 
\end{remark}
\subsection{Proof of Theorem~\ref{mth11}: logarithmic function}

The following ``two-point inequality'' will play a crucial role in our proofs. 

\begin{lemma}\label{help1}
For any $\beta \in  [\frac{1}{2},1]$ and all $0\leq x\leq y \leq \frac{1}{2}$ we have 
\begin{align}\label{twop99}
((y-x)^{1/\beta}+B(y)^{1/\beta})^{\beta} +B(x)\geq 2B\Big(\frac{y+x}{2}\Big),
\end{align}
where $B(x) = x \big( \log_{2}(1/x)\big)^{\beta}$.
\end{lemma}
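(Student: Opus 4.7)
The plan is to fix $y \in (0, 1/2]$, substitute $x = sy$ with $s \in [0,1]$, and reduce \eqref{twop99} to a dimensionless form. Using $B(y)^{1/\beta} = y^{1/\beta}\log_2(1/y)$ and dividing by $y$, and setting $\tau := \log_2(1/y) \geq 1$, the inequality becomes
\begin{align*}
I(\tau,s) := (\tau + a)^\beta + s(\tau + c)^\beta - (1+s)(\tau + d)^\beta \geq 0,
\end{align*}
where $a = (1-s)^{1/\beta}$, $c = \log_2(1/s)$, and $d = \log_2(2/(1+s))$. The identity $1 + \log_2(1/y) = \log_2(2/y)$ makes the boundary cases $I(\tau, 0) = I(\tau, 1) = 0$ immediate, and these correspond exactly to the equality cases on dyadic subcubes.

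The next observation is asymptotic: the binomial expansion $(\tau + u)^\beta = \tau^\beta(1 + \beta u/\tau + O(\tau^{-2}))$ yields $I(\tau, s) = \beta\bigl[a + sc - (1+s) d\bigr]\tau^{\beta - 1} + O(\tau^{\beta - 2})$, so $I(\tau, s) \to 0$ as $\tau \to \infty$. Thus it suffices to show that $\tau \mapsto I(\tau, s)$ is non-increasing on $[1, \infty)$, which combined with the vanishing limit yields $I \geq 0$. Non-increase is equivalent to the derivative estimate
\begin{align*}
(\tau + a)^{\beta - 1} + s(\tau + c)^{\beta - 1} \leq (1+s)(\tau + d)^{\beta - 1}, \qquad \tau \geq 1.
\end{align*}

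To establish this derivative inequality, the plan has two steps. First, verify the first-moment comparison $a + sc \geq (1+s) d$, which asks that $(1-s)^{1/\beta} - s\log_2 s + (1+s)\log_2(1+s) - (1+s) \geq 0$ on $[0,1]$; this expression vanishes at $s \in \{0,1\}$, and since $a = (1-s)^{1/\beta}$ is monotone in $\beta$ it suffices to treat $\beta = 1/2$, where a second-derivative computation shows a single interior maximum. Second, use the integral representation $(\tau + u)^\beta - \tau^\beta = \beta \int_0^u (\tau + t)^{\beta - 1}\,dt$ to translate the derivative inequality into a weighted comparison of the decreasing density $(\tau + t)^{\beta - 1}$: when $a \geq d$ the conclusion follows immediately from monotonicity, while when $a < d$ one combines the first-moment bound with the constraint $\tau \geq 1$.

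The main obstacle is the case $a < d$ (which does occur, e.g., for $\beta = 1/2$, $s = 1/2$): since $t \mapsto t^{\beta - 1}$ is convex, naive Jensen points the wrong way and the simple measure-inclusion argument is inapplicable. Moreover, numerically the derivative inequality fails for $\tau$ slightly below $1$, so the hypothesis $y \leq 1/2$ is genuinely essential, consistent with the fact that equality in \eqref{twop99} is attained on dyadic subcubes $y = 2^{-k}$, $x = 0$.
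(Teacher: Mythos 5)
The reduction to $I(\tau,s):=(\tau+a)^{\beta}+s(\tau+c)^{\beta}-(1+s)(\tau+d)^{\beta}\geq 0$ with $a=(1-s)^{1/\beta}$, $c=\log_{2}(1/s)$, $d=\log_{2}(2/(1+s))$, $\tau=\log_{2}(1/y)\geq 1$ is correct, as are the boundary cases and the limit $I(\tau,s)\to 0$ as $\tau\to\infty$. But the central claim of your argument --- that $\tau\mapsto I(\tau,s)$ is non-increasing on $[1,\infty)$, i.e.\ that $(\tau+a)^{\beta-1}+s(\tau+c)^{\beta-1}\leq (1+s)(\tau+d)^{\beta-1}$ for all $\tau\geq 1$ --- is false. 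Take $\beta=1/2$, $s=0.9$, $\tau=1$: then $a=0.01$, $c=\log_{2}(10/9)\approx 0.1520$, $d=\log_{2}(20/19)\approx 0.0740$, and the left-hand side is $\approx 0.995037+0.838525=1.833562$ while the right-hand side is $\approx 1.9\times 0.964934=1.833375$, so the derivative inequality fails by about $2\times 10^{-4}$; numerically it keeps failing for $1\leq\tau\lesssim 1.07$ at this $s$, exactly in the regime $a<d$, $s$ close to $1$ that you identify as the hard case. Thus $I(\cdot,s)$ first increases and then decreases to $0$, so monotonicity plus the vanishing limit cannot yield $I\geq 0$ on $[1,\infty)$; in particular nothing in your proposal establishes the inequality at $\tau=1$ (i.e.\ $y=1/2$), which is where the real content of the lemma sits, and the suggestion to ``combine the first-moment bound with $\tau\geq1$'' when $a<d$ cannot be repaired, since the target derivative inequality is itself false at $\tau=1$.

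For comparison, the paper applies monotonicity to the \emph{normalized} quantity $\bigl(\frac{(\tau+a)^{\beta}+s(\tau+c)^{\beta}}{1+s}\bigr)^{1/\beta}-\tau$, which is non-decreasing in $\tau$ by H\"older's inequality (their Step 2); monotonicity in that direction runs the reduction the opposite way, from general $\tau\geq 1$ down to the single base case $\tau=1$ ($y=1/2$), after first reducing to $\beta=1/2$ via the power-mean inequality (Step 1). The base case $y=1/2$, $\beta=1/2$ is then verified by an explicit computation (Taylor-type bounds on the logarithms, squaring twice, and Descartes' rule of signs), and this is the bulk of the proof. Your route tries to bypass the base case entirely by sending $\tau\to\infty$, but the monotonicity needed for that points the wrong way and fails near $\tau=1$; the essential case $y=1/2$ therefore remains unproven in your proposal.
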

\begin{proof}
First we rewrite the inequality (\ref{twop99}) as 
\begin{align}\label{twop}
\left(y^{1/\beta} \log_{2}(1/y) +(y-x)^{1/\beta} \right)^{\beta}+x (\log_{2}(1/x))^{\beta} \geq (x+y)\Big( \log_{2}\Big(\frac{2}{x+y}\Big)\Big)^{\beta}.
\end{align}
We divide the proof into three steps.
\medskip

\noindent \underline{Step 1}. If the inequality holds for $\beta=1/2$, then it holds for all $\beta \geq 1/2$. Indeed, let us rewrite the inequality \eqref{twop} as
\begin{align}\label{twop1}
\left[\frac{y}{x+y} \Big(\log_{2}(1/y)+\Big(\frac{y-x}{y}\Big)^{1/\beta} \Big)^{\beta} + \frac{x}{x+y}(\log_{2}(1/x))^{\beta}\right]^{1/\beta}\geq  \log_{2}\left(\frac{2}{x+y}\right).
\end{align}
Next, apply   $\Big(\frac{y-x}{y}\Big)^{1/\beta} \geq \Big(\frac{y-x}{y}\Big)^{2}$ and the fact that $\beta \mapsto (\mathbb{E} |X|^{\beta})^{1/\beta}$ is non-decreasing on $[0,\infty)$ for a random variable $X$. 

\medskip

In what follows we assume $\beta=\frac{1}{2}$ in (\ref{twop}).

\medskip

\noindent \underline{Step 2}. The inequality for $y=\frac{1}{2}$ implies the inequality for all $y \in [x,\frac{1}{2}]$. 
Indeed, let $x=yt$ where $0\leq t \leq 1$,   and let $a = \log_{2}(1/y) \in [1,\infty)$. Then (\ref{twop1}) can be rewritten as 
\begin{align*}
\left[\frac{1}{t+1} \Big(a+(1-t)^{1/\beta} \Big)^{\beta} + \frac{t}{t+1}(a+\log_{2}(1/t))^{\beta}\right]^{1/\beta}-a\geq  \log_{2}\left(\frac{2}{t+1}\right).
\end{align*}
Notice that $\varphi(a) = (\mathbb{E}(a+|X|)^{\beta})^{1/\beta}-a$ is non-decreasing on $(0,\infty)$ for a random variable $X$.  Indeed, 
\begin{align*}
\varphi'(a) = (\mathbb{E} (a+|X|)^{\beta})^{\frac{1}{\beta}-1}\mathbb{E} (a+|X|)^{\beta-1}-1 \geq 0
\end{align*}
by H\"older's inequality. 

\medskip

Hence it suffices to verify the inequality for $y=1/2$. 
\medskip

\noindent \underline{Step 3}. We verify the inequality
\begin{align}\label{22point}
 \sqrt{1+(1-t)^{2}} + t\sqrt{1+\log_{2}(1/t)} \geq (t+1)\sqrt{1+ \log_{2}\left(\frac{2}{t+1}\right)}, \quad t \in[0,1]. 
\end{align}
If we denote $1-t=s$, then the inequality \eqref{22point} takes the form 
\begin{align}\label{sqr}
\sqrt{1+s^{2}}+(1-s)\sqrt{1-\log_{2}(1-s)}\geq (2-s)\sqrt{1+\log_{2}\left(\frac{2}{2-s}\right)}.
\end{align}
Next we use two simple inequalities 
\begin{align}\label{logar1}
&-\log_{2}(1-s) \geq \frac{s}{\ln(2)}+\frac{s^{2}}{2\ln(2)};\\
&\log_{2}\left(\frac{2}{2-s}\right)\leq \frac{s}{2\ln(2)}+\left( 1-\frac{1}{2\ln(2)}\right)s^{2}. \label{logar2}
\end{align}
The first inequality follows by Taylor's theorem. The second inequality follows by considering the function $g(s) = \frac{s}{2\ln(2)}+\left( 1-\frac{1}{2\ln(2)}\right)s^{2} - \log_{2}\left(\frac{2}{2-s}\right)$. Note that $g(0)=g(1)=0$. Furthermore, 
\begin{align*}
\frac{g'(s)}{s} = - \frac{(4\ln(2)-2)s+5-8\ln(2)}{2(2-s)\ln(2)}
\end{align*}
changes sign only once from $+$ to $-$ on $(0,1)$ (notice that $5-8\ln(2)<0$ and $3-4\ln(2)>0$). Hence $g(s) \geq 0$ on $[0,1]$. 

Next, we replace the logarithms in \eqref{sqr} by their lower and upper bounds correspondingly obtained in \eqref{logar1} and \eqref{logar2}. After squaring both sides of the obtained inequality, it suffices to prove
\begin{align*}
&1+s^{2}+2(1-s)\sqrt{(1+s^{2})\left(1+ \frac{s}{\ln(2)}+\frac{s^{2}}{2\ln(2)}\right)}+(1-s)^{2}\left(1+ \frac{s}{\ln(2)}+\frac{s^{2}}{2\ln(2)}\right)\\
&\geq (2-s)^{2}\left(1+\frac{s}{2\ln(2)}+\left( 1-\frac{1}{2\ln(2)}\right)s^{2} \right). 
\end{align*}
Isolating the square root term on the left-hand side of the inequality, and squaring both sides again, it suffices to prove
\begin{align*}
&4(1-s)^{2}(1+s^{2})\left(1+ \frac{s}{\ln(2)}+\frac{s^{2}}{2\ln(2)}\right) \geq \left[ -(1-s)^{2}\left(1+ \frac{s}{\ln(2)}+\frac{s^{2}}{2\ln(2)}\right) \right.\\
&\left. + (2-s)^{2}\left(1+\frac{s}{2\ln(2)}+\left( 1-\frac{1}{2\ln(2)}\right)s^{2} \right) - 1-s^{2} \right]^{2}.
\end{align*}
After opening the parentheses we see that the difference (the left-hand side minus the right-hand side) simplifies to $-\frac{s^{2}(1-s)^{2}}{4\ln^{2}(2)}\times v(s)$, where 
\begin{align*}
&v(s) = 4 (\ln(2)-1)^{2} s^{4}+12(1-\ln(2))(2\ln(2)-1)s^{3}-(17-18\ln(2))(2\ln(2)-1)s^{2}\\
&-(12-24\ln(2)+16\ln^{2}(2))s+32\ln^{2}(2)-32\ln(2)+4.
\end{align*}
The factor $s^{2}(1-s)^{2}$ should not be surprising as $s=0$ and $s=1$ are the equality cases in (\ref{sqr}) and, by the several reductions that we did, we kept the values at $s=0$ and $s=1$ unchanged. 
The signs of the coefficients in $v(s)$, from the highest to the lowest power,  are $+,+,-,-,-$. Therefore, by Descartes' rule of sign,  the number of positive roots of the polynomial $v$ is at most one. As $v(s)$ is positive at infinity, and $v(1) = 32\ln^{2}(2)-32\ln(2)+1<0$ it follows that $v(s)<0$ on $[0,1]$, and thus $g(s)>0$ on $[0,1]$, concluding the proof of the inequality (\ref{22point}) and hence of Lemma~\ref{help1}.
\end{proof}

The following lemma will be used for sets of measure greater than $1/2$.

\begin{lemma}\label{help2}
For any $\beta \in  [\frac{1}{2},\ln(2)]$ and all $0\leq x\leq y \leq \frac{1}{2}$ we have 
\begin{align}\label{twop999}
\big((y-x)^{1/\beta}+B(x)^{1/\beta}\big)^{\beta} +B(y)\geq 2B\Big(\frac{y+x}{2}\Big),
\end{align}
where $B(x) = x \left( \log_{2}(1/x)\right)^{\beta}$.
\end{lemma}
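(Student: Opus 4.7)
The plan is to mirror the three-step proof of Lemma~\ref{help1}. Writing out the desired inequality with $B(z) = z\log_{2}^{\beta}(1/z)$, it is equivalent to
\begin{align*}
\bigl((y-x)^{1/\beta} + x^{1/\beta}\log_{2}(1/x)\bigr)^{\beta} + y\log_{2}^{\beta}(1/y) \geq (x+y)\log_{2}^{\beta}\tfrac{2}{x+y}.
\end{align*}
I would proceed via (a) a reduction in $\beta$, (b) a reduction to $y=\tfrac{1}{2}$, and (c) the verification of a single-variable inequality, following the blueprint of the previous lemma.

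Step (a) is where I expect the main obstacle and the source of the restriction $\beta \leq \ln 2$. In Lemma~\ref{help1} the reduction to $\beta = \tfrac{1}{2}$ hinged on the fact that $(y-x)/y \leq 1$, which allowed the bound $((y-x)/y)^{1/\beta} \geq ((y-x)/y)^{2}$ and hence the use of $L^{\beta}$-mean monotonicity in one consistent direction. Here the analogous ratio is $(y-x)/x$, which can be arbitrarily large, and the two ingredients — the behaviour in $\beta$ of $((y-x)/x)^{1/\beta}$ and of the weighted $L^{\beta}$-mean — pull in opposite directions on the two regimes $y \gtrless 2x$. The number $\ln 2$ should emerge from a balancing condition that makes a single monotonicity in $\beta$ valid across both regimes, or equivalently from pushing to the endpoint $\beta = \ln 2$ as the worst case.

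Step (b) should closely mirror Step 2 of Lemma~\ref{help1}: substitute $x = ty$ with $t \in [0,1]$, set $a = \log_{2}(1/y) \in [1,\infty)$, rewrite the inequality in the form $(\mathbb{E}(a + |X|)^{\beta})^{1/\beta} - a \geq \log_{2}\tfrac{2}{1+t}$ for an appropriate two-point random variable $X$, and verify monotonicity in $a$ via the same H\"older-type derivative computation to reduce the problem to $a = 1$, i.e.\ $y = \tfrac{1}{2}$.

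Step (c) then becomes a one-variable inequality in $s = 1 - t \in [0,1]$. Unlike Lemma~\ref{help1}, the only equality case is $x = y$ (that is, $s = 0$): the case $x = 0$ reduces to $1 + a^{\beta} \geq (1+a)^{\beta}$, which is strict for $\beta < 1$ and $a > 0$. I would follow the template of Step 3 of Lemma~\ref{help1} — replacing the logarithms by the Taylor estimates
\begin{align*}
-\log_{2}(1-s) \geq \tfrac{s}{\ln 2} + \tfrac{s^{2}}{2\ln 2}, \qquad \log_{2}\tfrac{2}{2-s} \leq \tfrac{s}{2\ln 2} + \Big(1 - \tfrac{1}{2\ln 2}\Big) s^{2},
\end{align*}
isolating the remaining non-polynomial term and eliminating it by successive squaring (or by raising to a suitable power adapted to the value of $\beta$ at which step (a) lands), factoring out the $s^{2}$ dictated by the unique equality case, and finishing via Descartes' rule of signs applied to the remaining polynomial on $[0,1]$.
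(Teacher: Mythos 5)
Your proposal has a genuine gap: it is a plan to re-run the three-step proof of Lemma~\ref{help1} for the swapped inequality, but the step carrying all the difficulty --- your step (a), the reduction in $\beta$, which is also where the constraint $\beta\le\ln 2$ must enter --- is left as a hope (``the number $\ln 2$ should emerge from a balancing condition''), with no argument. As you yourself note, the mechanism of Step~1 of Lemma~\ref{help1} breaks here: after writing $x=ty$ the relevant inner term is $((1-t)/t)^{1/\beta}$, and for $t<1/2$ this ratio exceeds $1$, so lowering $\beta$ increases this term while the $L^{\beta}$-mean monotonicity pulls in the opposite direction; there is no single direction in which to push $\beta$, and you offer no mechanism that produces $\ln 2$ as the threshold. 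In addition, if the worst case really were the endpoint $\beta=\ln 2$, your step (c) endgame (Taylor bounds, isolate the radical, square, Descartes) is not available, since $1/\ln 2$ is irrational and no finite sequence of powers reduces the resulting expression to a polynomial. So as written the argument does not close.

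The paper proves the lemma by a short reduction to Lemma~\ref{help1} rather than by repeating its proof, and the role of $\ln 2$ is entirely different from what you guessed: for $\beta\in[1/2,\ln 2]$ one has $B'(x)=(\log_2(1/x))^{\beta-1}\bigl(\log_2(1/x)-\tfrac{\beta}{\ln 2}\bigr)\ge 0$ on $[0,1/2]$, so $B(x)\le B(y)$. Since for $\beta\in(0,1]$ the map $\psi(t)=((y-x)^{1/\beta}+t^{1/\beta})^{\beta}-t$ is non-increasing on $[0,\infty)$, it follows that $\psi(B(x))\ge\psi(B(y))$, i.e.
\begin{align*}
\bigl((y-x)^{1/\beta}+B(x)^{1/\beta}\bigr)^{\beta}+B(y)\;\ge\;\bigl((y-x)^{1/\beta}+B(y)^{1/\beta}\bigr)^{\beta}+B(x),
\end{align*}
and the right-hand side is at least $2B\bigl(\tfrac{x+y}{2}\bigr)$ by Lemma~\ref{help1}. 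This monotone-swap comparison is exactly the ingredient missing from your step (a); without it (or some substitute for it) the proposal is incomplete.
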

\begin{proof}
Notice that $B$ is non-decreasing on $[0,1/2]$ since
\begin{align*}
B'(x) 
& = (\log_{2}(1/x))^{\beta-1}\Big(\log_{2}(1/x) - \frac{\beta}{\ln(2)} \Big)\geq 0
\end{align*}
for $\beta \in [1/2,\ln(2)]$. 
For all $0\leq x \leq y \leq 1/2$ we have 
\begin{align}\label{jose1}
((y-x)^{1/\beta}+B(x)^{1/\beta})^{\beta} - ((y-x)^{1/\beta}+B(y)^{1/\beta})^{\beta}+B(y)-B(x)\geq 0.
\end{align}
Indeed, for each $\beta \in (0,1]$  the map $\psi(t) =((y-x)^{1/\beta}+t^{1/\beta})^{\beta}-t$ is non-increasing on  $[0, \infty)$. Thus, using the fact $B(x)\leq B(y)$ it follows that $\psi(B(x)) \geq \psi(B(y))$, which is the same as (\ref{jose1}). Finally, the left-hand side of   (\ref{twop999}) minus the  left-hand side of (\ref{jose1}) coincides with the left-hand side of (\ref{twop99}) and the lemma follows.
\end{proof}

The next lemma is the main technical part of  \cite{KP}. We note that this result is also contained in Lemmas \ref{lemma:quad case1} and \ref{lemma:quad case 2} for $\beta=\log_2(3/2)$.

\begin{lemma}[{\cite[pp. 4219-4220]{KP}}]\label{help3}
The function $B(x)=2x(1-x)$ satisfies \eqref{sami} for  $\beta = \log_{2}(3/2)$. 
\end{lemma}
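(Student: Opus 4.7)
The plan is to reduce the two-variable inequality \eqref{sami} to a one-variable problem through the algebraic identity
\[
2B\Big(\tfrac{x+y}{2}\Big) - B(x) = (y-x)^{2} + 2y(1-y),
\]
which can be checked by direct expansion. Setting $u := y - x$ and $v := B(y) = 2y(1-y)$ and using $2^{\beta}-1 = 1/2$, the second expression in the $\max$ in \eqref{sami} equals $u + v/2$, so \eqref{sami} becomes
\[
\max\big\{(u^{1/\beta} + v^{1/\beta})^{\beta}, \; u + v/2\big\} \geq u^{2} + v.
\]
I would then split according to the Remark after Lemma~\ref{th44}: the first expression is the larger one precisely when $v \geq u$.

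In the \emph{linear regime} $v < u$, the bound reduces to $u(1-u) \geq v/2$, which after expansion is equivalent to $x(2y - x - 1) \geq 0$. The constraint $v < u$ reads $x < y(2y-1)$, forcing $y > 1/2$; and on $(1/2,1]$ the elementary identity $y(2y-1) - (2y-1) = (2y-1)(y-1) \leq 0$ yields $x \leq 2y - 1$, closing this case.

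In the \emph{nonlinear regime} $u \leq v$, I would substitute $r := u/v \in [0,1]$, factor $v$ out of both sides, and obtain $(1 + r^{1/\beta})^{\beta} - 1 \geq r^{2}v$. Since $v = 2y(1-y) \leq 1/2$ automatically for $y \in [0,1]$, it suffices to prove the one-variable inequality
\[
h(r) := (1 + r^{1/\beta})^{\beta} - 1 - \tfrac{r^{2}}{2} \geq 0, \qquad r \in [0,1].
\]
The endpoints satisfy $h(0) = 0$ and $h(1) = 2^{\beta} - \tfrac{3}{2} = 0$, a cancellation that is precisely what pins down the exponent $\beta = \log_{2}(3/2)$.

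The main obstacle is this final one-variable inequality. My strategy is to show that $h'$ has a unique zero in $(0,1)$, so $h$ attains its unique maximum strictly inside the interval and is therefore non-negative by virtue of the two equal endpoint values. Computing $h'(r) = (1+r^{1/\beta})^{\beta-1}\,r^{1/\beta-1} - r$ and substituting $s = r^{1/\beta}$, the critical-point equation rearranges to $1 + s = s^{(2\beta-1)/(\beta-1)}$. Since $1/2 < \beta < 1$, the exponent is negative, so the right-hand side is strictly decreasing while the left-hand side is strictly increasing in $s$; hence there is a unique positive solution, and correspondingly a unique critical point of $h$ in $(0,\infty)$. Combined with $h'(r) > 0$ near $r = 0$ (because $1/\beta - 1 \in (0,1)$, making $r^{1/\beta-1}$ dominate $r$) and $h'(1) = 2^{\beta-1} - 1 = -1/4 < 0$, this critical point lies in $(0,1)$ and must be the global maximum, completing the proof.
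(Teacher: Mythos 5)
Your argument is correct, and it verifies \eqref{sami} on all of $\{0\le x\le y\le 1\}$, which is exactly what the lemma requires. The case split is the same as the paper's (which obtains the lemma as the $\beta=\log_2(3/2)$, $C_\beta=2$ instance of Lemmas \ref{lemma:quad case1} and \ref{lemma:quad case 2}): use the linear branch of the max when $y-x\ge B(y)$ and the power branch when $y-x\le B(y)$. The execution, however, is genuinely different and exploits the special values $C_\beta=2$ and $2^{\beta}=3/2$. In the linear branch the paper argues via concavity of $x\mapsto f(x,y)$ plus checks at the endpoints $x=0$ and $x=y-B(y)$, whereas your identity $2B\big(\tfrac{x+y}{2}\big)-B(x)=(y-x)^2+B(y)$ reduces everything to the explicit factorization $x(2y-x-1)\ge 0$, handled by elementary bounds. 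In the power branch the paper fixes $y$, studies the two-variable function $g(x,y)$, and imports the boundary value $g(y-B(y),y)\ge 0$ from the linear lemma; you instead use the degree-one homogeneity of $(u^{1/\beta}+v^{1/\beta})^{\beta}$ together with $v=B(y)\le 1/2$ to collapse the problem to the single one-variable inequality $h(r)=(1+r^{1/\beta})^{\beta}-1-\tfrac{r^2}{2}\ge 0$ on $[0,1]$; your uniqueness-of-critical-point argument (monotone increasing versus monotone decreasing sides of $1+s=s^{(2\beta-1)/(\beta-1)}$, the signs of $h'$ near $0$ and at $r=1$, and $h(0)=h(1)=0$) is the same device as \eqref{eq:der g =0 quad} in the paper, carried out correctly. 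What the paper's route buys is generality: Lemmas \ref{lemma:quad case1}--\ref{lemma:quad case 2} cover the whole family $\beta\in[1/2,\log_2(3/2)]$ with $C_\beta=2\sqrt{2^{\beta+1}-2}$, which is needed for Theorem \ref{thm:quadratic}; your scaling trick leans on $2^{\beta}=3/2$ making both endpoint values of $h$ vanish. What your route buys is a shorter, essentially one-variable proof at the exponent relevant to Theorem \ref{mth11}.
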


Now we are ready to prove Theorem~\ref{mth11}. Let $\beta = \beta_{0} = \log_{2}(3/2)$ in Lemma~\ref{th44}, and consider 
\begin{align}\label{eq:B}
B(t) := 
\begin{cases}
t (\log_{2}(1/t))^{\beta_{0}} & t \in [0,1/4),\\
2t(1-t) & t \in [1/4,3/4],\\
(1-t) (\log_{2}(1/(1-t)))^{\beta_{0}} & t \in (3/4,1].\\
\end{cases}
\end{align}
Notice that 
\begin{align*}
B(t) = \max\{ t^{*} (\log_{2}(1/t^{*}))^{\beta_{0}}, 2t(1-t)\}=:\max\{B_{1}(t), B_{2}(t)\}, \quad t\in [0,1],
\end{align*}
where $t^{*} = \min\{t, 1-t\}$. The left-hand side of the inequality (\ref{sami}) can be rewritten as $\Phi(B(x),B(y),(y-x))$, where 
\begin{align*}
\Phi(u,v,w) := \max\{ (w^{1/\beta_0}+v^{1/\beta_0})^{\beta_0},w+(2^{\beta_0}-1)v\} + u.
\end{align*}
The map $(u,v,w) \in [0,\infty)^{3} \mapsto \Phi(u,v,w)$ is non-decreasing in $u$ and $v$. The goal is to show that 
\begin{align}\label{bol1}
\Phi(B(x),B(y),(y-x)) \geq 2B\Big(\frac{x+y}{2}\Big)
\end{align}
holds for all $0\leq x\leq y \leq 1$ and the choice of $B$ in \eqref{eq:B}. There will be six cases in total  to consider.

\begin{enumerate}[(i)]
    \item If $x,y \in [0,1/4]$.  Then $(x+y)/2 \in [0,1/4]$, so $B$ in (\ref{bol1}) coincides with $B_{1}$ and then (\ref{bol1}) follows from Lemma~\ref{help1}.
    \item If $x,y \in [3/4,1]$.  Then $(x+y)/2 \in [3/4,1]$, so $B$ in (\ref{bol1}) coincides with $B_{1}$. Denote $x=1-b$, $y = 1-a$. Then $0\leq a \leq b \leq 1/4$, and we have 
\begin{align*}
\Phi(B(x),B(y),(y-x)) & = \Phi(B_{1}(x),B_{1}(y),(y-x)) =\Phi(B_{1}(b),B_{1}(a),(b-a))\\
&\geq 2B_{1}((a+b)/2)=2B((x+y)/2),
\end{align*}
where the inequality follows by Lemma \ref{help2}.
\item If $x, y \in [1/4,3/4]$. Then $(x+y)/2 \in [1/4,3/4]$, so $B$ in (\ref{bol1}) coincides with $B_{2}$ and then \eqref{bol1} follows from Lemma~\ref{help3}. 
\item If $x \in [0,1/4], y \in [3/4,1]$. Then $(x+y)/2 \in [1/4,3/4]$. Therefore
\begin{align*}
\Phi(B(x),B(y),(y-x)) & \geq  \Phi(B_{2}(x),B_{2}(y),(y-x)) \\& \geq
2B_{2}((x+y)/2)=2B((x+y)/2),
\end{align*} 
where the second inequality follows by Lemma \ref{help3}.
\item If $x \in [0,1/4], y \in [1/4,3/4]$. Then $(x+y)/2 \in [1/8, 1/2]$. We distinguish two cases:
\begin{itemize}
    \item if $(x+y)/2 \in [0,1/4]$, then $y\leq 1/2$ and we have 
\begin{align*}
\Phi(B(x),B(y),(y-x)) & \geq \Phi(B_{1}(x),B_{1}(y),(y-x)) \\
& \geq
2B_{1}((x+y)/2) =  2B((x+y)/2),
\end{align*}
where in the second inequality we have used Lemma \ref{help1}.
    \item If $(x+y)/2 \in [1/4,3/4]$ then 
\begin{align*}
\Phi(B(x),B(y),(y-x)) & \geq \Phi(B_{2}(x),B_{2}(y),(y-x))\\
&\geq 2B_{2}((x+y)/2) =  2B((x+y)/2),
\end{align*}
where in the second inequality we have used Lemma \ref{help3}.
\end{itemize}
\item If $x \in [1/4,3/4], y \in [3/4,1]$. Then $(x+y)/2 \in [1/2, 7/8]$. We distinguish two cases:
\begin{itemize}
    \item If $(x+y)/2 \in  [1/4, 3/4]$ then we use the bound $B(y)\geq B_{2}(y)$ and proceed by Lemma~\ref{help3}, similarly to (iv), or the second item in (v).
    \item If  $(x+y)/2 \in  [3/4, 1]$, then $x \geq 1/2$. As in (ii), denote $x=1-b$, $y=1-a$. Note that $0 \leq a \leq 1/4 \leq b \leq 1/2$ and we have
\begin{align*}
\Phi(B(x),B(y),(y-x)) & \geq \Phi(B_{1}(x),B_{1}(y),(y-x))  = \Phi(B_{1}(b),B_{1}(a),(b-a)) \\
& \geq 2B_{1}((a+b)/2)  =2B((x+y)/2),
\end{align*}
where in the second inequality we have used Lemma \ref{help2}.
\end{itemize}
\end{enumerate}

Thus we have verified that \eqref{bol1} holds for the choice of $B$ in \eqref{eq:B}. Consequently, combining this with Lemma~\ref{th44} we obtain 
\[
\mathbb{E} h^{\beta_{0}}_{A} \geq B(\mu(A)) \geq \mu(A)^{*} (\log_{2}(1/\mu(A)^{*}))^{\beta_{0}}.
\]
This finishes the proof of the main inequality (\ref{iso2}).

To verify (\ref{iso1}), we apply H\"older's inequality to \eqref{iso2} for any $\alpha \in [0,1]$, and any $A \subset \{0,1\}^{n}$ with $\mu(A)\leq 1/2$. This yields,
\begin{align*}
(\mathbb{E} h_{A}^{\beta_{0}/\alpha})^{\alpha} \mu(A)^{1-\alpha} \geq (\mathbb{E} h_{A}^{\beta_{0}/\alpha})^{\alpha} \mu(\partial A)^{1-\alpha} \geq \mathbb{E} h_{A}^{\beta_{0}} \geq \mu(A) (\log_{2} (1/\mu(A)))^{\beta_{0}},
\end{align*}
where the first inequality uses the trivial bound $\mu(\partial A) \leq \mu(A)$. Thus we obtain $(\mathbb{E} h_{A}^{\beta_{0}/\alpha})^{\alpha} \mu(A)^{1-\alpha} \geq \mu(A) (\log_{2} (1/\mu(A)))^{\beta_{0}}$.  Dividing both sides of the inequality by $\mu(A)^{1-\alpha}$ and rising it to the power $1/\alpha$ we obtain (\ref{iso1}). This concludes the proof of Theorem~\ref{mth11}.

\subsection{Proof of Theorem \ref{thm:quadratic}: quadratic polynomial}\label{sec:quad}

The proof of Theorem \ref{thm:quadratic} will be a consequence of the upcoming Lemmas \ref{lemma:quad case1} and \ref{lemma:quad case 2}, when combined with Lemma \ref{th44}.

\begin{lemma}\label{lemma:quad case1}
Let $\beta \in [1/2,\log_2(3/2)]$. If $C_\beta=2 \sqrt{2^{\beta+1}-2}$ and $B(x)=C_\beta x (1-x)$, then the inequality
    \begin{align}\label{dav1}
     y-x+(2^{\beta}-1)B(y)
+B(x)\geq 2B\Big(\frac{y+x}{2}\Big)
    \end{align}
    holds 
    for all  $0 \leq y \leq 1$ and $0 \leq x \leq y -B(y)$.
\end{lemma}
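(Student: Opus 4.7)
The plan is to reduce \eqref{dav1} to a one-dimensional concave problem and check endpoints. Write $C := C_\beta$, so $B(t) = C t(1-t)$. A direct expansion gives the parallelogram-type identity
\[
B(x) + B(y) - 2 B\Bigl(\tfrac{x+y}{2}\Bigr) = -\tfrac{C}{2}(y-x)^2.
\]
Fixing $y$ and substituting $u := y - x \geq 0$, the inequality \eqref{dav1} becomes $g(u) \geq 0$, where
\[
g(u) := u + (2^\beta - 2) B(y) - \tfrac{C}{2} u^2,
\]
and $u$ ranges over $[B(y), y]$ (the translation of $0 \leq x \leq y - B(y)$). Since $g$ is a concave quadratic in $u$, it suffices to verify $g \geq 0$ at the two endpoints.

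\emph{Endpoint $u = B(y)$.} A short computation gives $g(B(y)) = B(y)\bigl[(2^\beta - 1) - \tfrac{C}{2} B(y)\bigr]$. Using $B(y) = Cy(1-y) \leq C/4$ together with $C^2 = 4(2^{\beta+1} - 2) = 8(2^\beta - 1)$, we get $\tfrac{C}{2} B(y) \leq \tfrac{C^2}{8} = 2^\beta - 1$, so the bracket is non-negative, with equality precisely at $y = 1/2$.

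\emph{Endpoint $u = y$.} For $y > 0$, factoring gives
\[
g(y) = y\bigl[1 + (2^\beta - 2) C + C\bigl(\tfrac{3}{2} - 2^\beta\bigr) y\bigr].
\]
The hypothesis $\beta \leq \log_2(3/2)$ ensures $\tfrac{3}{2} - 2^\beta \geq 0$, so the bracket is non-decreasing in $y$. The condition $y - B(y) \geq 0$ restricts $y$ to $\{0\} \cup [1 - 1/C, 1]$ ($y=0$ trivial), so it is enough to check the bracket at $y = 1 - 1/C$. Direct simplification reduces it to $2^\beta - \tfrac{1}{2} - \tfrac{C}{2}$, which is $\geq 0$ iff $C \leq 2^{\beta+1} - 1$. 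Writing $v := 2^{\beta+1} \in [2\sqrt{2},3]$, squaring yields the equivalent identity $(v-3)^2 \geq 0$, which always holds.

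The argument is essentially a calculation; its only real content is that the value $C_\beta = 2\sqrt{2^{\beta+1}-2}$ is calibrated so both endpoint checks are tight. Equality in the first check is forced by $C_\beta^2/8 = 2^\beta - 1$ and is attained at the half-cube $y = 1/2$, while the second check becomes tight exactly at the Kahn--Park endpoint $\beta = \log_2(3/2)$, i.e., $v = 3$. The one point requiring care is identifying the effective domain $y \in [1 - 1/C, 1]$ before analyzing the second endpoint; otherwise there is no serious obstacle.
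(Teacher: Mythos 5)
Your proof is correct and takes essentially the same approach as the paper's: after the parallelogram identity the difference is a concave quadratic in $x$ (your $u=y-x$), and you check the endpoint $x=y-B(y)$ via $B(y)\le C/4$ together with $C^2=8(2^\beta-1)$, and the endpoint $x=0$ via monotonicity in $y$ on the effective domain $y\ge 1-1/C$, exactly as in the paper. The only cosmetic difference is at the last step, where the paper writes the value at $y=1-1/C$ directly as $(\sqrt{2^{\beta+1}-2}-1)^2\ge 0$, while you square $C\le 2^{\beta+1}-1$ to arrive at $(2^{\beta+1}-3)^2\ge 0$.
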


\begin{proof}

Let $f(x,y)$ be the left-hand side of (\ref{dav1}) minus the right-hand side of (\ref{dav1}), that is, 
\begin{align}\label{eq:def f}
f(x,y) := C_{\beta}(2^{\beta}-2) y(1-y)-\frac{1}{2}(x-y)(C_{\beta}(x-y)+2).
\end{align}
The map $x \mapsto f(x,y)$ is concave, so in order to see that $f(x,y) \geq 0$ for $y \in [0,1]$ and $x \in [0, y -B(y)]$, it suffices to check the two inequalities: (i) $f(y-B(y),y)\geq 0$ and (ii) $f(0,y)\geq 0$. 
To verify (i) we have
\begin{align*}
f(y-B(y),y) = \frac{C_{\beta}}{2}y(1-y)(2^{\beta+1}-2-C^{2}_{\beta}y(1-y)).
\end{align*}
Notice that $2^{\beta+1}-2-C^{2}_{\beta}y(1-y) \geq 2^{\beta+1}-2-\frac{C^{2}_{\beta}}{4} = 0$ for all $y \in [0,1]$. To verify (ii) we have 
\begin{align*}
2f(0,y)/y=yC_{\beta}(3-2^{\beta+1})+2^{\beta+1}C_{\beta}-4C_{\beta}+2.
\end{align*}
Notice that $3-2^{\beta+1}\geq 0$ for all $\beta \in [1/2,\log_{2}(3/2)]$. Furthermore,  $y-B(y)\geq 0$ implies $\frac{C_{\beta}-1}{C_{\beta}}\leq y$. The value of $2f(0,y)/y$ at the point $y= \frac{C_{\beta}-1}{C_{\beta}}$ equals to $(\sqrt{2^{\beta+1}-2}-1)^{2} \geq 0$. 
\end{proof}

\begin{lemma}\label{lemma:quad case 2}
Let $\beta \in [1/2, \log_2(3/2)]$.  If $C_\beta=2 \sqrt{2^{\beta+1}-2}$ and $B(x)=C_\beta x (1-x)$, then the inequality
\begin{align}\label{dav23}
\big((y-x)^{1/\beta}+B(y)^{1/\beta})^{\beta}
+B(x)\geq 2B\Big(\frac{y+x}{2}\Big)
\end{align}
holds for all $0 \leq x \leq y \leq 1$ such that $x \in [y-B(y), y]$.
\end{lemma}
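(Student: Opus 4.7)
The plan is to reduce the two-variable inequality \eqref{dav23} to a one-variable inequality via a short polynomial identity followed by a normalization. First I would set $s := y - x$, so that the hypothesis $x \in [y - B(y), y]$ becomes $s \in [0, B(y)]$. Since $B(x) = C_\beta x(1-x)$ is quadratic in $x$, a direct expansion gives the identity
\[
2B\!\left(\frac{x+y}{2}\right) - B(x) \;=\; B(y) + \frac{C_\beta}{2}\, s^{2},
\]
so \eqref{dav23} is equivalent to
\begin{equation*}
\bigl(s^{1/\beta} + B(y)^{1/\beta}\bigr)^{\beta} \;\geq\; B(y) + \frac{C_\beta}{2}\, s^{2}.
\end{equation*}

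Next I would normalize. Writing $\tau := s / B(y) \in [0,1]$ (the case $B(y)=0$ is trivial) and dividing the last inequality by $B(y)$, the claim becomes $(1+\tau^{1/\beta})^\beta \geq 1 + \tfrac{C_\beta}{2} B(y)\,\tau^2$. Using the elementary bound $B(y) \leq C_\beta/4$ together with the defining relation $C_\beta^2/8 = 2^{\beta}-1$, it suffices to prove the $y$-independent inequality
\[
(1+\tau^{1/\beta})^{\beta} \;\geq\; 1 + (2^{\beta}-1)\,\tau^{2}, \qquad \tau \in [0,1].
\]
With the further substitution $u := \tau^{1/\beta} \in [0,1]$, this is the assertion
\[
\psi(u) \,:=\, (1+u)^{\beta} - 1 - (2^{\beta}-1)\,u^{2\beta} \,\geq\, 0 \quad \text{on } [0,1],
\]
which is what I would now focus on.

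To prove $\psi \geq 0$ I would note $\psi(0) = \psi(1) = 0$ and analyze
\[
\psi'(u) \;=\; \beta(1+u)^{\beta-1} - 2\beta(2^{\beta}-1)\,u^{2\beta-1}.
\]
Because $\beta \in [1/2, \log_{2}(3/2)]$ gives $\beta - 1 < 0$ and $2\beta - 1 \geq 0$, the factor $(1+u)^{\beta-1}$ is strictly decreasing on $[0,1]$ while $u^{2\beta-1}$ is non-decreasing, so $\psi'$ can vanish at most once in $(0,1)$. A direct evaluation yields $\psi'(0^{+}) > 0$ (the only subtlety is the borderline $\beta = 1/2$, where $u^{2\beta-1} = 1$ at $0$ but one still has $\psi'(0) = \tfrac{1}{2}(3 - 2\sqrt{2}) > 0$), while $\psi'(1) = \beta(2 - 3 \cdot 2^{\beta-1}) < 0$ throughout $[1/2, \log_{2}(3/2)]$ since $\beta > \log_{2}(4/3)$. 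Hence $\psi$ is unimodal on $[0,1]$ with zero endpoint values, giving $\psi \geq 0$. The polynomial identity in the first step and the normalization are straightforward; the one-variable analysis is the technical core, and the only point requiring care is the endpoint $\beta = 1/2$, where the exponent $2\beta - 1$ degenerates to $0$. Equality in \eqref{dav23} occurs precisely at $s = 0$ and at $s = B(y)$ with $y = 1/2$, consistent with the subcube/half-cube extremizers.
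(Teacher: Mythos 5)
Your proof is correct, and it takes a genuinely different route from the paper. The paper fixes $y$ and runs a calculus argument in $x$ on $g(x,y)=\bigl(B(y)^{1/\beta}+(y-x)^{1/\beta}\bigr)^{\beta}-B(y)-\tfrac{C_\beta}{2}(y-x)^2$: it shows $\partial_x g$ vanishes at most once on the interval by rewriting $\partial_x g=0$ as an equation between a monotone increasing and a monotone decreasing function of $y-x$, checks the sign of $\partial_x g$ near $x=y$, and uses the endpoint bound $g(y-B(y),y)=f(y-B(y),y)\geq 0$ imported from the proof of the companion lemma (Lemma \ref{lemma:quad case1}). You instead exploit the same exact quadratic identity $2B\bigl(\tfrac{x+y}{2}\bigr)-B(x)=B(y)+\tfrac{C_\beta}{2}(y-x)^2$ but then normalize by $B(y)$ and use $B(y)\leq C_\beta/4$ together with $C_\beta^2/8=2^\beta-1$ to eliminate $y$ entirely, reducing the lemma to the single one-variable inequality $(1+u)^\beta\geq 1+(2^\beta-1)u^{2\beta}$ on $[0,1]$, settled by noting that its derivative is strictly decreasing (difference of a decreasing and a nondecreasing term), positive at $0^+$ (including the borderline $\beta=1/2$) and negative at $1$ (using $\beta>\log_2(4/3)$), with zero endpoint values. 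What your approach buys: it is self-contained (no appeal to the other lemma's endpoint computation), covers the full range $y-x\in[0,B(y)]$ uniformly in $y$ without the $\max\{y-B(y),0\}$ bookkeeping, and transparently exhibits the equality cases $x=y$ and $(x,y)=(1/2-B(1/2),1/2)$; what the paper's approach buys is uniformity of method with the cubic case (Lemma \ref{lemma:cubic case 2}), where no such exact $y$-eliminating reduction is available. All the individual steps you state check out: the quadratic identity, the normalization, the bound $\tfrac{C_\beta}{2}B(y)\leq 2^\beta-1$, and the endpoint evaluations $\psi'(0)=\tfrac12(3-2\sqrt2)>0$ at $\beta=1/2$ and $\psi'(1)=\beta(2-3\cdot 2^{\beta-1})<0$.
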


\begin{proof}
Let $g(x,y)$ be the left-hand side of (\ref{dav23}) minus the right-hand side of (\ref{dav23}), that is,
\begin{align*}
g(x,y)&:=\left[(C_{\beta}y(1-y))^{1/\beta}+(y-x)^{1/\beta}\right]^{\beta}-C_{\beta}y(1-y) - \frac{C_{\beta}}{2}(y-x)^{2}.
\end{align*}
%
We observe that $g(y,y)=0$ and, by the proof of Lemma~\ref{lemma:quad case1} we have $g(y-B(y),y)=f(y-B(y),y) \geq 0$ on $[0,1]$, where $f$ is as in \eqref{eq:def f}. Then, for a fixed $y\in[0,1]$, in order to prove that $g(x,y)\geq0$ for all $x\in[y-B(y),y]$, it  suffices to show the following two claims: 
\begin{enumerate}[(i)]
    \item the map $x \mapsto g(x,y)$ has at most one critical point on $(y-B(y),y)$;
    \item there exists $\epsilon>0$ such that $g(x,y)>0$ for all $x\in(y-\epsilon,y)$.
\end{enumerate}

We first show (i). We have that
\begin{equation}\label{eq:der g}
    \partial_{x}g(x,y)=-(y-x)^{\frac{1}{\beta}-1}\left[(C_{\beta}y(1-y))^{\frac{1}{\beta}}+(y-x)^{\frac{1}{\beta}}\right]^{\beta-1}+C_{\beta}(y-x).
\end{equation}
Let $A:=y-x$ and $B:=C_{\beta}y(1-y)$. Notice that $0\leq A\leq B$, and $B$ is independent of $x$. We have
$$
\partial_x g(x,y)=-A^{\frac{1}{\beta}-1}[B^{\frac{1}{\beta}}+A^{\frac{1}{\beta}}]^{\beta-1}+C_{\beta}A,
$$
so that $\partial_x g(x,y)=0$ is equivalent to  
\begin{equation}\label{eq:der g =0 quad}
B^{\frac{1}{\beta}}+A^{\frac{1}{\beta}}=C^{\frac{1}{\beta-1}}_{\beta}A^{\frac{2\beta-1}{\beta(\beta-1)}}.
\end{equation}
For $\beta\geq 1/2$, the left-hand side above increases in $A$, whereas the right-hand side decreases in $A$. Thus, \eqref{eq:der g =0 quad} can hold in at most one point, establishing (i). 

Item (ii) follows from noting that $\lim_{x \to y} \frac{\partial_x g(x,y)}{(y-x)^{\frac{1}{\beta}-1}}<0$ and $g(y,y)=0$.
\end{proof}

\subsection{Proof of Theorem \ref{thm:cubic}: cubic polynomial}\label{sec:cubic}
In this section we will be concerned with the polynomial 
\begin{align}\label{jd3}
B(x)=8x(1-x) \Big[ \Big(1-\frac{2^{\alpha+1}}{3}\Big) x + \Big(\frac{2^\alpha}{3}-\frac{1}{4}\Big) \Big].
\end{align}
 We remark that $B$ is chosen to 
satisfy $B(0)=B(1)=0$, $B(1/2)=1/2$ and  $B(1/4)=2^\alpha/4$. It will be convenient for later use to expand $B$ as 
\begin{equation}\label{eq:B cubic expanded}
    B(x)=8\Big(\frac{2^\alpha}{3}-\frac{1}{4}\Big)x  - 8 \Big(2^\alpha-\frac{5}{4}\Big)x^2 - 8 \Big(1-\frac{2^{\alpha+1}}{3}\Big) x^3.
\end{equation}

Theorem \ref{thm:cubic} follows from combining Lemma \ref{th44} and the upcoming Lemmas \ref{lemma:cubic case 1} and \ref{lemma:cubic case 2}.

\begin{lemma}\label{lemma:cubic case 1}
    Let $\alpha=1/2$ and  $\beta=0.53$. The inequality
    \[
     y-x+(2^{\beta}-1)B(y)
+B(x)\geq 2B\Big(\frac{y+x}{2}\Big).
    \]
    holds for all $0 \leq x \leq y -B(y)$, $0 \leq y \leq 1$. 
\end{lemma}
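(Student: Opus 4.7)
The plan is to follow the structure of the proof of Lemma~\ref{lemma:quad case1}, with modifications needed since $B$ is cubic rather than quadratic. Set
\[
f(x,y) := y - x + (2^{\beta}-1)B(y) + B(x) - 2B\Big(\tfrac{y+x}{2}\Big),
\]
so the goal is $f(x,y)\ge 0$ on the prescribed region. Writing $B(u)=c_1 u + c_2 u^2 + c_3 u^3$ with $c_1 = 8(\tfrac{\sqrt{2}}{3}-\tfrac{1}{4}) > 0$ and $c_2, c_3 < 0$, and using the identity $B(s+d) + B(s-d) = 2B(s) + d^{2} B''(s)$ valid for any cubic $B$, one obtains the compact form
\[
f(x,y) \;=\; (y-x) - (2-2^{\beta})B(y) + \frac{(y-x)^{2}}{4}\, B''\!\Big(\tfrac{y+x}{2}\Big).
\]
A preliminary observation is that the constraint region is non-empty only for $y \in \{0\}\cup[1/2,1]$, since $B(y)\le y$ on $[0,1]$ exactly there, so one may assume $y\in [1/2,1]$.

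The next step is to reduce to the two boundary cases $x=0$ and $x=y-B(y)$. From $B'(u) = c_1 + 2c_2 u + 3c_3 u^{2}$, one computes
\[
\partial_x f(x,y) = -1 - (y-x)\Big[c_2 + \tfrac{3}{4}c_3(3x+y)\Big],
\]
which is a downward parabola in $x$ (coefficient of $x^{2}$ equal to $\tfrac{9c_3}{4} < 0$) with vertex at $x^{\ast}(y) = \tfrac{y}{3} - \tfrac{2c_2}{9c_3}$. Numerically $x^{\ast}(y) < 0$ for all $y\in[0,1]$, so $\partial_x f(\cdot, y)$ is strictly decreasing on $[0, y-B(y)]$; hence it has at most one zero there and can change sign only from $+$ to $-$. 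Consequently $f(\cdot, y)$ is either monotone or first increases then decreases, and its minimum on $[0, y-B(y)]$ is attained at an endpoint.

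It remains to check (a) $f(0,y)\ge 0$ and (b) $f(y-B(y),y)\ge 0$ on $[1/2,1]$. For (a), using $B(y) - 2B(y/2) = \tfrac{c_2}{2}y^{2} + \tfrac{3c_3}{4}y^{3}$ together with the identity $f(0,1)=0$ one factors
\[
f(0,y) = y(1-y)\,\big[C_1 - C_3 y\big], \qquad C_1 = 1+(2^{\beta}-2)c_1, \quad C_3 = c_3(2^{\beta}-\tfrac{5}{4}).
\]
The signs $C_1 > 0$ (this is the constraint that pins down $\beta = 0.53$) and $C_3 < 0$ are immediate numerically, and give $f(0,y)\ge 0$. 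For (b), the compact form evaluated at $x = y-B(y)$ reads
\[
f(y-B(y),y) = B(y)\Big[(2^{\beta}-1) + \tfrac{B(y)}{4}\, B''\!\big(y - \tfrac{B(y)}{2}\big)\Big],
\]
which is a polynomial inequality in $y$ on $[1/2,1]$.

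The main obstacle is step (b): numerically the bracket is very close to zero around $y\approx 0.6$, so no crude estimate suffices. I expect it to be dispatched by exploiting the root $y=1$ to factor out $(1-y)$ from the polynomial in brackets, followed by an explicit sign analysis (e.g., via Descartes' rule of signs, in the spirit of the argument for the polynomial $v(s)$ in the proof of Lemma~\ref{help1}).
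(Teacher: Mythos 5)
Your setup is sound and matches the paper's strategy: the region is indeed nonempty only for $y\in\{0\}\cup[1/2,1]$ (the paper factors $y-B(y)=\tfrac{8(3-2^{\alpha+1})}{3}y(y-\tfrac12)(y-\tfrac{2^{\alpha}-9/8}{2^{\alpha}-3/2})$ to see this), your observation that $\partial_x f(\cdot,y)$ decreases on $[0,y-B(y)]$ is exactly the paper's concavity claim $\partial^2_{xx}f\le 0$, and your treatment of the endpoint $x=0$ via the factorization $f(0,y)=y(1-y)(C_1-C_3y)$ with $C_1>0$, $C_3<0$ is correct and arguably cleaner than the paper's concave-parabola argument for $\varphi(y)=f(0,y)/y$. (Minor quibble: $C_1>0$ holds for all $\beta\gtrsim 0.52$, so it is not what pins down $\beta=0.53$; the binding constraint sits in the other endpoint.)

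The genuine gap is precisely that other endpoint, which is the technical heart of the lemma and which you leave as an expectation rather than a proof. Your compact form $f(y-B(y),y)=B(y)\bigl[(2^{\beta}-1)+\tfrac{B(y)}{4}B''\bigl(y-\tfrac{B(y)}{2}\bigr)\bigr]$ is correct (the bracket is the paper's degree-$6$ polynomial $G(y)$), but your proposed route fails at the first step: the bracket does \emph{not} vanish at $y=1$, since $B(1)=0$ gives the value $2^{\beta}-1>0$ there, so there is no factor $(1-y)$ to extract. Only the full product vanishes at $y=1$, through the factor $B(y)$ you have already pulled out. Moreover, as you note, the bracket dips to within roughly $10^{-4}$ of zero near $y\approx 0.63$, so a Descartes-type sign count in the original variable will not by itself certify positivity on $[1/2,1]$; some quantitative argument located at that near-tangency is unavoidable. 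The paper handles this by expanding $G$ explicitly, showing $G'''\ge G'''(1/2)\ge 0$ and $G''(1/2)>0$ (hence $G$ is convex on $[1/2,1]$), and then bounding $G$ from below by its tangent line at $y_0=0.631$, checking $G'(y_0)>0$ and that the tangent is positive at $y=1/2$. Without an argument of this kind, your proof of the lemma is incomplete.
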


\begin{proof} 
We first observe that if $B(y)\leq y$, 
then $y \geq 1/2$. Indeed, a computation reveals that
\begin{equation}\label{eq:a_0-B(a_0)}
    y-B(y)= \frac{8(3-2^{\alpha +1})}{3}y\left(y-\frac{1}{2}\right)\left(y-\frac{2^{\alpha}-9/8}{2^{\alpha}-3/2}\right).
    \end{equation}
The claim now follows trivially since $\frac{2^{\alpha}-9/8}{2^{\alpha}-3/2} < 0$ for $\alpha=1/2 \leq \log_2(3/2)$. 

Let 
\begin{align}\label{ffdef}
    f(x,y):=(2^\beta-1)B(y) + (y-x) - 2 B\Big(\frac{x+y}{2}\Big) + B(x).
\end{align}
To verify Lemma~\ref{lemma:cubic case 1} it suffices to show the inequality $f(x,y) \geq 0$ for all $1/2 \leq y \leq 1$ and  all $0 \leq x \leq y- B(y)$. The inequality  will be implied by the following 3 claims: 
\begin{enumerate}[(i)]
    \item $f(0,y) \geq 0$ for all $y \in [1/2,1]$;
    \item $x \mapsto f(x,y)$ is concave on $[0,y-B(y)]$ for each fixed $y \in  [1/2,1]$;
    \item $f(y-B(y),y) \geq 0$ for all $y \in [1/2,1]$.
\end{enumerate}

To verify (i), notice that $\varphi(y):= f(0,y)/y$ is concave parabola as 
$$
3\varphi''(y) = \left(20-2^{\beta+4} \right)\left(3-2^{\alpha+1}\right)<0.
$$
On the other hand, $\varphi(1/2)=2^{\beta}-2^{\alpha}>0$ and $\varphi(1)=0.$ Thus $\varphi(y)\geq 0$ on $[1/2,1]$, and (i) follows.

To verify (ii) notice that 
$$
\partial^{2}_{xx}f(x,y) = -2(3-2^{\alpha+1})(6x-2y+2^{\alpha+1}+1)\leq 0 \quad \text{for all} \quad x,y \in [0,1].
$$


Next we verify (iii). Notice that $f(y-B(y),y) = B(y)G(y)$, where 
\begin{align*}
G(y) & =\Big(11-\frac{70}{9} \cdot 2^\alpha\Big) 2^{7}y^6 + (2^{\alpha+7} - 181) \frac{2^{6}}{3}y^{5} + \Big(1453-\frac{3082}{3}\cdot  2^\alpha \Big)\frac{2^{3}}{3}y^{4}\\
& -\Big(\frac{331}{3}-78 \cdot 2^{\alpha}\Big)2^{4}y^{3} +\Big(563-\frac{1192}{3} \cdot 2^\alpha \Big)\frac{2}{3}y^{2}-\Big(47-2^{\alpha+5}\Big)\frac{2}{3}y\\
&+2^{\beta} - 1
\end{align*}
is a polynomial of degree 6.  We have $G'''(y)\geq G'''(1/2)=(5\sqrt{2}-7)2^{7}  \geq 0$ on $[1/2,1]$ since the signs of all coefficients of the polynomial $G'''(y)$ are positive except of the free coefficient.  Also $G''(1/2)=(147-100\sqrt{2})\cdot 4/9> 0$. Thus $G(y)$ is convex on $[1/2,1]$. Therefore,  $G(y)\geq L(y) = G(y_{0})+G'(y_{0})(y-y_{0})$, where $y_{0}=0.631$. Since $G'(y_{0})=0.00036...>0$ and $L(1/2)=0.000067...>  0$  it follows that $G(y) \geq 0$ on $[1/2,1]$. 
\end{proof}

\begin{lemma}\label{lemma:cubic case 2}
Let $\alpha=0.5$ and $\beta=0.53$. The inequality
\[\big((y-x)^{1/\beta}+B(y)^{1/\beta})^{\beta}
+B(x)\geq 2B\Big(\frac{y+x}{2}\Big)
\]
holds for all $0 \leq x,  y \leq 1$ such that $x \in [\max\{y-B(y),0\}, y]$.
\end{lemma}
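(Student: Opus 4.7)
My plan is to follow the template of the proof of Lemma~\ref{lemma:quad case 2}. Define
\[
g(x,y):=\bigl((y-x)^{1/\beta}+B(y)^{1/\beta}\bigr)^{\beta}+B(x)-2B\Big(\frac{x+y}{2}\Big),
\]
and show $g(x,y)\geq 0$ on the specified region. The value $g(y,y)=0$ is automatic. For the left endpoint I split into two cases. When $y\geq 1/2$, the factorization \eqref{eq:a_0-B(a_0)} gives $y-B(y)\geq 0$, and a direct computation shows that $g(y-B(y),y)$ coincides with $f(y-B(y),y)$ from \eqref{ffdef}, hence is nonnegative by Lemma~\ref{lemma:cubic case 1}. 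When $y\leq 1/2$ the domain becomes $x\in[0,y]$, and I would verify $g(0,y)=(y^{1/\beta}+B(y)^{1/\beta})^{\beta}-2B(y/2)\geq 0$ on $[0,1/2]$ by a single-variable analysis, using $g(0,0)=0$, the explicit positive value $g(0,1/2)=2^{\beta-1}-\sqrt{2}/2>0$, and monotonicity/convexity of the reduced one-variable expression.

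Next I analyze the derivative. Writing $A=y-x$ and $D=B(y)^{1/\beta}$,
\[
\partial_x g(x,y)=-A^{1/\beta-1}\bigl[A^{1/\beta}+D\bigr]^{\beta-1}+B'(x)-B'\Big(\frac{x+y}{2}\Big),
\]
and expanding the cubic $B$ of \eqref{eq:B cubic expanded} yields
\[
B'(x)-B'\Big(\frac{x+y}{2}\Big)=A\bigl[\gamma_1+4\gamma_2 y-3\gamma_2 A\bigr],
\]
where $\gamma_1=8(\sqrt{2}-5/4)>0$ and $\gamma_2=6(1-2\sqrt{2}/3)>0$. Since $\beta>1/2$ forces $1/\beta-1<1$, as $A\to 0^+$ the term $A^{1/\beta-1}$ dominates the linear term $A$, so $\partial_x g<0$ on a left neighborhood of $x=y$; combined with $g(y,y)=0$ this gives $g(x,y)>0$ on some $(y-\epsilon,y)$.

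The main work is proving that $\partial_x g(x,y)=0$ admits at most one solution in the relevant interior interval. Dividing by $A$ puts the equation in the form
\[
\Phi_y(A):=A^{1/\beta-2}\bigl[A^{1/\beta}+D\bigr]^{\beta-1}=\gamma_1+4\gamma_2 y-3\gamma_2 A.
\]
Logarithmic differentiation shows $\Phi_y$ is strictly decreasing on $(0,\infty)$, since both summands of $(\log\Phi_y)'$ are negative for $\beta\in(1/2,1)$; the right-hand side is affine and decreasing. I would then establish log-convexity of $\Phi_y$ so that $(\Phi_y-\mathrm{RHS})'$ is monotone increasing from $-\infty$ at $A=0^+$; combined with $(\Phi_y-\mathrm{RHS})(A)\to+\infty$ as $A\to 0^+$, this reduces the uniqueness on $A\in(0,\min\{B(y),y\}]$ to a direct sign check at the right endpoint, exploiting the specific numerical values $\alpha=1/2$, $\beta=0.53$ and the smallness of $\gamma_2$ relative to $\gamma_1$.

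Once uniqueness is established, the standard argument of Lemma~\ref{lemma:quad case 2} concludes: $g$ is nonnegative at both endpoints, strictly positive just to the left of $y$, and admits at most one critical point in the interior, so the only possible shape forces $g$ to attain its minimum at an endpoint, hence $g\geq 0$ throughout. The principal obstacle is the critical-point uniqueness: in the quadratic case the rearranged equation $A^{1/\beta}+D=C_\beta^{1/(\beta-1)}A^{(2\beta-1)/(\beta(\beta-1))}$ had two sides monotone in opposite directions, whereas the cubic case introduces the $y$-dependent affine factor $\gamma_1+4\gamma_2y-3\gamma_2A$, breaking that clean dichotomy and requiring the more delicate log-convexity comparison sketched above.
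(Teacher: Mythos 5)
Your overall skeleton is the paper's: same $g$, same endpoint checks (including $g(y-B(y),y)=f(y-B(y),y)\ge 0$ via Lemma~\ref{lemma:cubic case 1}), the same ``at most one interior critical point plus sign of $\partial_x g$ near $x=y$'' conclusion, and your algebra $B'(x)-B'\big(\tfrac{x+y}{2}\big)=A[\gamma_1+4\gamma_2 y-3\gamma_2 A]$ is correct. The genuine gap is in the crucial uniqueness step, where you replace the paper's argument (showing that the critical points are the zeros of the strictly increasing map $\psi$, via the substitution $x=y-t$, monotonicity in $y$, and a tangent-line bound at $t=0.22$) by ``log-convexity of $\Phi_y$ plus a sign check at the right endpoint.'' Writing $F(A):=\Phi_y(A)-(\gamma_1+4\gamma_2 y-3\gamma_2 A)$, one has $\partial_x g=-A\,F(A)$, so your reduction needs $F\le 0$ at $A_{\max}=\min\{B(y),y\}$, i.e.\ $\partial_x g\ge 0$ at the left endpoint $x=\max\{y-B(y),0\}$: only then does convexity of $F$ (which does hold for $\beta=0.53$, though you never verify it, and it is tight: the relevant bound is $0.0924<0.1132$) force at most one zero in $(0,A_{\max}]$. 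But this endpoint sign condition is false for a range of $y$ below $1/2$: e.g.\ for $y=0.3$ one computes $B(0.3)\approx 0.401$ and $\partial_x g(0,0.3)\approx -0.62+0.42<0$, so $F(A_{\max})>0$. A convex $F$ with $F(0^+)=+\infty$ and $F(A_{\max})>0$ can perfectly well have two interior zeros, so your argument cannot conclude ``at most one critical point'' there; to salvage it you would need an extra substantive step (e.g.\ showing $\min F\ge 0$ for such $y$), which is not sketched. This is exactly the delicate part the paper spends display \eqref{marjvena} and the function $\varphi(t)$ on.

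A secondary, smaller gap: for $y\in[0,1/2]$ you propose to verify $g(0,y)\ge 0$ by ``monotonicity/convexity of the reduced one-variable expression,'' but $y\mapsto g(0,y)$ is neither monotone nor convex on $[0,1/2]$ (numerically it rises to about $0.04$ near $y\approx 0.3$ and falls back to $2^{\beta-1}-2^{-1/2}\approx 0.015$ at $y=1/2$), so some actual argument is needed; the paper does this cleanly via $(B(y)^{1/\beta}+y^{1/\beta})^{\beta}\ge (B(y)^{2}+y^{2})^{1/2}$ (valid since $1/\beta\le 2$) together with the explicit factorization of $B(y)^{2}+y^{2}-(2B(y/2))^{2}$. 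Your endpoint value $g(0,1/2)=2^{\beta-1}-\sqrt2/2>0$ and the near-$x=y$ sign analysis are fine.
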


\begin{proof}
Let
\begin{equation*}
    g(x,y):= \Big( B(y)^{\frac{1}{\beta}} + (y-x)^{\frac{1}{\beta}} \Big)^\beta - 2B\Big(\frac{x+y}{2}\Big)+B(x).
\end{equation*}
The goal is to show  $g(x,y)\geq 0$ for all  $0 \leq y \leq 1$ and all $\max\{y-B(y),0\}\leq x \leq y$. It is clear that $g(y,y)=0$. We recall from the proof of Lemma~\ref{lemma:cubic case 1} that $\max\{y-B(y),0\}=y-B(y)$ if and only if $y \in [1/2,1]$. Therefore, notice that for $y \in [1/2,1]$ we have $g(y-B(y),y)=f(y-B(y),y) \geq 0$   due to positivity of the polynomial $G(y)$  on $[1/2,1]$ defined in Lemma~\ref{lemma:cubic case 1}; here $f$ is as in \eqref{ffdef}. If $y \in [0,1/2]$, then 
\begin{align*}
g(0,y) &= (B(y)^{1/\beta}+y^{1/\beta})^{\beta}-2B(y/2) \geq (B(y)^{2}+y^{2})^{1/2}-2B(y/2).
\end{align*}
Next, we have
\begin{align*}
&B(y)^{2}+y^{2}-(2B(y/2))^{2} =\\
&\frac{17-12\sqrt{2}}{3}\cdot  y^{2}(1-2y)(1-y) (10y^{2}+y(29+28\sqrt{2})+51+36\sqrt{2})\geq 0
\end{align*}
for all $y \in [0,1/2]$. So we obtain  $g(\max\{y-B(y),0\},y)\geq 0$ for all $y \in [0,1]$. 

Thus to prove Lemma~\ref{lemma:cubic case 2} it suffices to obtain the following two claims for each fixed  $y \in [0,1]$: 
\begin{enumerate}[(i)]
    \item the map $x \mapsto g(x,y)$ has at most one critical point on $(0,y)$;
    \item there exists $\varepsilon>0$ such that $\partial_x g(x,y)<0$ for all $x \in (y-\varepsilon,y)$. 
\end{enumerate}

For the claim (ii), an immediate computation shows
\begin{align*}
    \partial_x g(x,y) & = - \Big( B(y)^{\frac{1}{\beta}} + (y-x)^{\frac{1}{\beta}} \Big)^{\beta-1} (y-x)^{\frac{1}{\beta}-1} - B'\Big( \frac{x+y}{2} \Big) + B'(x).
\end{align*}
Therefore, since $\frac{1}{\beta}-1<1$ it follows that $\lim_{x\to y} \frac{\partial_{x}g(x,y)}{(y-x)^{\frac{1}{\beta}-1}}>0$.

To verify the claim (i),  notice that 
\begin{align*}
&(y-x)^{1-\frac{1}{\beta}}\partial_x g(x,y) =- \Big( B(y)^{\frac{1}{\beta}} + (y-x)^{\frac{1}{\beta}} \Big)^{\beta-1} +\\
&2(3-2^{3/2})(y-x)^{2-\frac{1}{\beta}}(3x+y+2^{3/2}+1).
\end{align*}
It suffices to show that for each $y \in [0,1]$ the map
\begin{align*}
\psi(x) :=-(B(y)^{\frac{1}{\beta}} + (y-x)^{\frac{1}{\beta}})+[2(3-2^{3/2})(y-x)^{2-\frac{1}{\beta}}(3x+y+2^{3/2}+1)]^{\frac{1}{\beta-1}}
\end{align*}
 satisfies $\psi'>0$ on $(0,y)$. We have 
\begin{align}
&\frac{\psi'(x)(y-x)^{-\frac{2\beta-1}{\beta(\beta-1)}+1}}{(2(3-2\sqrt{2})(3x+y+2^{3/2}+1))^{\frac{1}{\beta-1}}} 
=\frac{((y-x)(3x+y+2^{3/2}+1))^{\frac{1}{1-\beta}}}{\beta(2(3-2\sqrt{2}))^{\frac{1}{\beta-1}}}+\frac{2\beta-1}{\beta(1-\beta)} \nonumber\\
&-\frac{3(y-x)}{(1-\beta)(3x+y+2^{3/2}+1)}. \label{marjvena}
\end{align}
Set $x=y-t$ in (\ref{marjvena}) where $y \in [t,1]$ and $t \in (0,1)$. Then the right-hand side of (\ref{marjvena}) is increasing in $y$ for each fixed $t$,  and for $y=t$ it takes the form  
\begin{align*}
\frac{(t(t+2^{3/2}+1))^{\frac{1}{1-\beta}}}{\beta(2(3-2\sqrt{2}))^{\frac{1}{\beta-1}}}+\frac{2\beta-1}{\beta(1-\beta)} -\frac{3t}{(1-\beta)(t+2^{3/2}+1)}=:\varphi(t).
\end{align*}
Clearly  $\varphi(t)$ is convex on $[0,1]$ as it is the sum of two convex functions. Therefore $\varphi(t)\geq L(t):=\varphi(0.22)+\varphi'(0.22)(t-0.22)$. Since $\varphi'(0.22)=0.053...>0$ and $L(0)=0.033...>0$  the lemma follows. 
\end{proof}

We finish this section discussing how Theorem \ref{thm:cubic} (or Corollary \ref{cor:quad large measure}) implies Corollary \ref{cor:sep cube}. This is already contained in \cite[Corollary 3.2]{KP}, but we include it here for completeness.

\begin{proof}[Proof of Corollary \ref{cor:sep cube}]
Let $(A,B,W)$ be a partition of $\{0,1\}^n$ with $\mu(A)=1/2$. Assume that $\mathbb{E} h_A^\beta  \geq P(\mu(A))$ holds for all such $A$. Then we have 
\begin{equation*}\label{eq:thm implies cor}
P(\mu(A))\le \mathbb{E} h_{B\cup W}^\beta =
\mathbb{E} (h_{B\cup W}^\beta \bbone_{B}) + \mathbb{E} (h_{B\cup W}^\beta \bbone_{W} )  \le \frac{|\nabla(A,B)|}{2^n} +n^\beta\mu(W),
\end{equation*}
where in the last inequality we have used the bound $h_{B \cup W} \leq n$. 
The desired result then follows if $P(1/2)=1/2$, which is the case for the function on the right-hand side of \eqref{eq: cubic}. 
\end{proof}

\subsection{Proof of Theorems~\ref{vectortal} and \ref{nstabilityt}}
We start from an  identity in \cite{PRS}  stating that 
\begin{align}\label{ihv1}
    -\frac{d}{dt} e^{-t\Delta }f(x) = \frac{1}{\sqrt{e^{2t}-1}} \mathbb{E}_{\xi} \sum_{j=1}^{n} \delta_{j}(t) D_{j} f(\xi(t) x),
\end{align}
where $x \xi(t) = (x_{1}\xi_{1}(t), \ldots, x_{n} \xi_{n}(t))$, and $\xi_{i}(t)$ are i.i.d. r.v. with 
\begin{align*}
\mathbb{P}(\xi_{j}(t)=\pm 1) = \frac{1\pm e^{-t}}{2}, \quad \delta_{j}(t) = \frac{\xi_{j}(t) -\mathbb{E} \xi_{j}(t)}{\sqrt{\mathrm{Var}(\xi_{j}(t))}}.
\end{align*}
The identity is easily verified for  Walsh functions $x^{S}$, and by linearity it follows for all functions $f$. 
By \eqref{ihv1} and  the fundamental theorem of calculus we have
\begin{align*}
\| f-e^{-t\Delta}f\|_{p} & \stackrel{(\ref{ihv1})}{\lesssim} \int_{0}^{t}\Big(\mathbb{E} \Big\| \sum_{j=1}^{n} \delta_{j}(s) D_{j} f(x) \Big\|^{p}\Big)^{1/p} 
\frac{ds}{\sqrt{e^{2s}-1}} \\
& \,\,\,\, \lesssim C_{q}(X) \sqrt{q}\|Df\|_{p} \int_{0}^{t}(1-e^{-2s})^{\frac{1}{q}-1}ds \\ 
& \,\,\,\, \lesssim C_{q}(X) q^{3/2} (1-e^{-2t})^{\frac{1}{q}} \|Df\|_{p}, 
\end{align*}
where we also  used Theorem 4.1 in \cite{PRS} in the second inequality. This concludes the proof of Theorem \ref{nstabilityt}.

For Theorem \ref{vectortal}, we note that since $\| e^{-t\Delta} f\|_{\infty} \leq \|f\|_{\infty}\leq 1$ the above inequality and Minkowski's inequality imply that
\begin{align}\label{nstability}
\|f\|_{2}^{2/p} -\|e^{-t\Delta}f\|_{2}^{2/p}  \lesssim C_{q}(X) q^{3/2} (1-e^{-2t})^{\frac{1}{q}} \|Df\|_{p}.
\end{align}
Denote $\varepsilon  = 1-e^{-2t}$. By hypercontractivity~\cite{Bon1} and the H\"older inequality  we have $\| e^{-t\Delta} f\|_{2}  \leq \|f\|_{2}\left(\frac{\|f\|_{1}}{\|f\|_{2}}\right)^{\frac{\varepsilon}{2-\varepsilon}}.$ Thus  
\begin{align}\label{diffp1}
 \frac{1-\left(\frac{\|f\|_{1}}{\|f\|_{2}}\right)^{\frac{2\varepsilon}{p(2-\varepsilon)}}}{\varepsilon^{1/q}} \lesssim C_{q}(X) q^{3/2} \frac{\|Df\|_{p}}{\|f\|_{2}^{2/p}}.
\end{align}
If $\frac{\|f\|_{2}}{\|f\|_{1}}<10$, then the theorem follows by taking $t\to \infty$ in (\ref{nstability}). Otherwise, choosing $\varepsilon = \frac{1}{\log \frac{e \|f\|_{2}}{\|f\|_{1}}}$ in (\ref{diffp1}) proves the theorem.

\section{Appendix}\label{sec:nec}

For completeness, we include the argument that shows the necessity of the condition $\beta \geq 1/2$ in the isoperimetric inequalities \eqref{conj1} and \eqref{eq:conj n beta}. In both cases, the counterexamples are given by Hamming balls.
Let $a \in \{0,1\}^n$ and $r>0$. Let $B(a,r)$ denote the Hamming ball of center $a$ and radius $r$, defined with respect to the $\ell^1$-distance in $\{0,1\}^n$. Note that the number of elements of $B(a,r)$ is
$\sum_{i=0}^{r^*} \binom{n}{i}$, 
where $r^*:=\min\{\lfloor r \rfloor, n\}$. If $r \in \mathbb{N}$, $r \leq n$, the sphere $S(a,r)$ has $\binom{n}{r}$ elements.

We start with the claim in \eqref{eq:beta>=1/2}. Let $n$ be a positive even integer, $a \in \{0,1\}^n$ arbitrary, and consider $A=B(a,n/2)$. Clearly, $\supp(h_A)=S(a,n/2)$. Since  $h_A(x)=n/2$ for $x \in S(0,n/2)$, and $h_A(x)=0$ otherwise,  we have
\begin{equation*}
    \mathbb{E} h^\beta_A =  (n/2)^\beta \mu (\supp h_A) = (n/2)^\beta \mu (S(0,n/2))=(n/2)^\beta \frac{\binom{n}{n/2}}{2^n}.
\end{equation*}
Thus,  Stirling's approximation gives $\lim_{n \to \infty}\mathbb{E} h^\beta_A=0$
for $\beta < 1/2$. 


To verify the necessity  of the condition $\beta\geq 1/2$ in  \eqref{eq:conj n beta} we let $n$ be a positive odd integer,  and let $a \in \{0,1\}^n$ be arbitrary. Consider the Hamming ball $A=B(a,(n-1)/2)$, which has measure $\mu(A)=2^{-n}\sum_{i=0}^{(n-1)/2} \binom{n}{i}=1/2$, and let $W$ be the boundary of $A$, and let $B:=\{0,1\}^d\setminus{(A\cup W)}$. Clearly $|\nabla(A,B)|=0$. Then the inequality \eqref{eq:conj n beta}  reduces to 
\begin{equation}\label{eq:sep cube ball}
K|W|n^{\beta}\geq 2^{n-1}.
\end{equation}
Since $|W|=\binom{n}{(n-1)/2}$, it follows from Stirling's approximation that the condition $\beta \geq 1/2$ is necessary for the inequality $\liminf_{n\to \infty} K \frac{|W| n^{\beta}}{2^{n-1}}\geq 1$ to hold.

\vskip1cm


\end{document}